\newcommand{\ci}[1]{_{{}_{\!\scriptstyle{#1}}}}
\newtheorem{thm}{Theorem}[section]
\newtheorem{lemma}[thm]{Lemma}
\newtheorem{corollary}[thm]{Corollary}
\newtheorem{proposition}[thm]{Proposition}
\newtheorem{definition}[thm]{Definition}
\numberwithin{equation}{section}
\newcommand{\Be}{\begin{equation}}
\newcommand{\Ee}{\end{equation}}
\newcommand{\Bea}{\begin{align}}
\newcommand{\Eea}{\end{align}}
\newcommand{\Beas}{\begin{align*}}
\newcommand{\Eeas}{\end{endalign*}}
\newcommand{\Benu}{\begin{enumerate}}
\newcommand{\Eenu}{\end{enumerate}}
\newcommand{\Bi}{\begin{itemize}}
\newcommand{\Ei}{\end{itemize}}
\newcommand\bbone{{\mathbbm 1}}
\def\vs{{\varsigma}}
\def\oxi{{\overline \xi}}
\def\ux{{\underline x}}
\def\uy{{\underline y}}
\def\uX{{\underline X}}
\def\uY{{\underline Y}}
\def\ua{{\underline a}}
\def\ub{{\underline b}}
\def\uw{{\underline w}}
\def\dil{{\text{\rm Dil}}}
\def\intslash{\rlap{\kern  .32em $\mspace {.5mu}\backslash$ }\int}
\def\qsl{{\rlap{\kern  .32em $\mspace {.5mu}\backslash$ }\int_{Q_x}}}
\def\emph#1{{\it #1 }}
\def\Ga{\Gamma}
\def\ga{\gamma}
\def\cf{{\it cf}}
\def\supp{{\text{\rm supp }}}
\def\inn#1#2{\langle#1,#2\rangle}
\def\noi{\noindent}
\def\meas{{\text{\rm meas}}}
\def\lc{\lesssim}
\def\gc{\gtrsim}
\def\eps{\varepsilon}
\def\ka{\kappa}
\def\la{\lambda}
\def\vphi{\varphi}
              \def\Om{\Omega}
\def\bbH{{\mathbb {H}}}
\def\bbN{{\mathbb {N}}}
\def\bbR{{\mathbb {R}}}
\def\bbV{{\mathbb {V}}}
\def\bbZ{{\mathbb {Z}}}
\def\cK{{\mathcal {K}}}
\def\cM{{\mathcal {M}}}
\def\cQ{{\mathcal {Q}}}
\def\cR{{\mathcal {R}}}
\def\cT{{\mathcal {T}}}
\def\cU{{\mathcal {U}}}
\def\cV{{\mathcal {V}}}
\def\cZ{{\mathcal {Z}}}
\def\tx{{\tilde x}}
\def\tu{{\tilde u}}
\def\be#1{\begin{equation}\label{#1}}
\def\endeq{\end{equation}}
\def\endal{\end{align}}
\def\bas{\begin{align*}}
\def\eas{\end{align*}}
\def\bi{\begin{itemize}}
\def\ei{\end{itemize}}
\begin{document}
\title[Stability of  a maximal function estimate]
{Spherical means on the Heisenberg group:\\  Stability of  a maximal function estimate}
\author
[T. Anderson \ \ L. Cladek \ \ M. Pramanik \ \ A. Seeger]
{Theresa C. Anderson \  \ Laura Cladek \\  Malabika Pramanik \  \ Andreas Seeger}
\subjclass[2010]{42B25, 22E25, 43A80, 35S30}
\thanks{Preliminary version. Research supported in part by the National Science Foundation}
%\date{Preliminary version, December 31, 2017.}

\address{Theresa C. Anderson \\ Department of Mathematics \\ University of Wisconsin \\480 Lincoln Drive\\ Madison, WI
53706, USA} \email{tcanderson@math.wisc.edu}

\address{Laura Cladek\\ Department of Mathematics\\ UCLA \\
Box 951555\\ 
Los Angeles, CA 90095-1555, USA}
\email{cladekl@math.ucla.edu}

\address{Malabika Pramanik\\ Department of Mathematics
\\
University of British Columbia\\
1984 Mathematics Road\\ 
Vancouver, B.C. \\Canada V6T 1Z2 }
 \email{malabika@math.ubc.ca}	
\address{Andreas Seeger \\ Department of Mathematics \\ University of Wisconsin \\480 Lincoln Drive\\ Madison, WI
53706, USA} \email{seeger@math.wisc.edu}
\maketitle 

\begin{abstract} Consider the surface measure $\mu$ on a sphere in a nonvertical hyperplane on the Heisenberg group $\mathbb{H}^n$, $n\ge 2$, and the convolution $f*\mu$.  Form the associated maximal function $Mf=\sup_{t>0}|f*\mu_t|$  generated by the automorphic dilations. 
We use decoupling inequalities due to Wolff and Bourgain-Demeter
 to prove   $L^p$-boundedness of $M$ in an optimal range.
 \end{abstract}
\section{Introduction}

Let $\bbH^n$ be the Heisenberg group of Euclidean dimension $2n+1$, with the group law
$$(x,u)\cdot (y,v)=(x+y, u+v+ x^\intercal Jy) \,$$
%\text{  where $J=\begin{pmatrix} 0&-I_n\\I_n&0\end{pmatrix}$ }$$
with  $J$ is a nondegenerate skew symmetric $2n\times 2n$ matrix.
%and $\beta\neq 0$. 
Consider $\bbH^n$ as the vector space $ \bbR^{2n+1}$ 
%$ as \bbH^n$  as a vector space
and let 
 $\bbV$ be a linear subspace of dimension $2n$  which does not contain the center of $\bbH^n$, i.e.  $$\bbV\equiv \bbV_\la=\{(x,\la(x))\}$$
is the graph of a linear functional  $\la:\bbR^{2n}\to \bbR$. Let $\Sigma$ be a convex hypersurface in $\bbV$, with {\it nonvanishing curvature},  which contains the origin in its interior. Note that  $\Sigma $ is a surface of codimension two in $\bbH^n$. Let $\mu$ be a smooth density on $\Sigma$, that is  $\mu=\chi d\sigma$ where $d\sigma$ is  surface measure on $\Sigma$ and $\chi\in C^\infty_c$.

The  natural dilation group $\{\dil_t\}_{t>0}$ of automorphisms on $\bbH^n$ is given by 
$$(x,u)\mapsto \dil_t (x,u)=(tx, t^2 u)$$
where $x\in \bbR^{2n}$, $u\in \bbR$. Define
the dilated measure $\mu_t\equiv \dil_t\mu$ by its action on
Schwartz functions $f$, 
\Be
\inn {\mu_t}{f} =\int f(\dil_t(x,u))d\mu(x,u)
\Ee
and consider the maximal function 
\Be \label{maxop}\cM f= \sup_{t>0} |f*\mu_t| \Ee
where the convolution refers to the noncommutative convolution on the Heisenberg group
(see \eqref{Hconvol}).
%That is, we have, for $f$ a Schwartz function,
%\[f*\mu(x,u)= \int\]

The purpose of this paper is to prove a sharp result on $L^p$ boundedness for $n\ge 2$;
a corresponding estimate on $\bbH^1$ will remain open. The problem for $n\ge 2$ was first taken up  in a paper by Nevo and Thangavelu  \cite{NevoTh} who considered the spherical measure on
$\bbR^{2n}\times \{0\}$ (i.e. the case 
$\la=0$) and proved $L^p$ boundedness for the maximal operator 
 in the non-optimal range $p>\frac{2n-1}{2n-2}$. 
 An optimal  result for $\la=0$  was proved by D. M\"uller and one of the authors in \cite{MuSe}. There it is shown that $L^p$ boundedness holds for  $p>\frac{2n}{2n-1}$ when $n\ge 2$ and $\la=0$.  In the case $\la\neq 0$ the paper \cite{MuSe} only has a   non-optimal 
 result, proving $L^p$ boundedness of the maximal operator for $p> \frac{ 2n-1/3}{2n-4/3}$. It was also  conjectured that 
 boundedness for $\la\neq 0$ remains true in the larger range $p>\frac{2n}{2n-1}$. 
   The results in   \cite{MuSe} actually cover  the  larger class of 
   {\it M\'etivier groups}  which strictly  contains the class of groups of Heisenberg type (with possibly higher dimensional center).
 We note that for the case $\bbH^n$, 
  $n\ge 2$, $\la=0$ an alternative proof of the result in \cite{MuSe} was   given 
  %later but independently 
   by Narayanan and Thangavelu \cite{NaTh}, who used spectral  theoretic arguments and the representation theory of the Heisenberg group.  
   
   % the problem of bounding the circular maximal operator on $L^p(\bbH^1)$ is presently  open for all $p<\infty$.

The crucial difference between the two cases $\la=0$ and $\la\neq 0$ is that the automorphic 
dilations
$\{\dil_t\}$ act on $V_0$ but not on $V_\la$ for $\la\neq 0$. 
%The invariance of  $V_0$ leads to improved $L^2$ estimates for the time derivatives  $\frac{d}{dt} f*\mu_t$, and then also to efficient $L^2$ estimates for  $(\frac{d}{dt})^{1/2+\eps} f*\mu_t$. 
We refer to \cite{MuSe} for an explanation of this phenomenon in terms of the geometry of the underlying Fourier integral operators with folding canonical relations. For the case $\la\neq 0$ the $L^2$ methods of  both \cite{MuSe} 
and  \cite{NaTh}   are no longer applicable to obtain the optimal range of $L^p$-boundedness.
Here we  use different $L^p$ methods  based on  Wolff's decoupling inequality \cite{Wolff1}, \cite{LaWo} and its recent improvements by Bourgain and Demeter \cite{bourgain-demeter} to  prove the conjecture in \cite{MuSe}  for the Heisenberg groups $\bbH^n$, $n\ge 2$,  for all subspaces $V_\la$.
The  approach is motivated by  previous  results on $L^q$-Sobolev estimates for averaging operators associated to families of curves in \cite{prs}, \cite{pr-se}. In an early   version  for genera\-lized Radon transforms associated with families of curves  in three dimensions (\cite{pr-se-var})   the relevant decoupling inequalities  are  proved by an induction procedure where a scaled version of the constant coefficient decoupling inequality  is combined with  a nonlinear change of variables, at  every stage in the iteration.  We use this idea here as well.
The resulting theorem can be interpreted as a stability result for the maximal function estimate in \cite{MuSe}.
\begin{thm}\label{mainthm}
Let $n\ge 2$, $\Sigma\subset V_\la$  as above  and $\mu$ be a smooth density  on $\Sigma$. Let 
  $M$ as in \eqref{maxop} and  $p>\frac{2n}{2n-1}$. Then $M$ extends to a bounded operator on $L^p(\bbH^n)$.
\end{thm}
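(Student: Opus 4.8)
The plan is to reduce Theorem~\ref{mainthm}, via the automorphic scaling and a Littlewood--Paley decomposition, to a single sharp $L^p$ space-time estimate for the averaging operators $A_tf=f*\mu_t$, and then to prove that estimate by combining a rescaled form of the Wolff / Bourgain--Demeter decoupling inequalities with an induction on scales, in the spirit of \cite{pr-se-var},\cite{prs},\cite{pr-se}. First I would split $\mu=\sum_{k\ge0}\mu^k$, where for $k\ge1$ the Fourier transform of $\mu^k$ is localized at frequency $\sim2^k$ in the variable dual to the central direction $u$ (the only direction in which $A_t$ is translation invariant). The low-frequency term $\mu^0$ gives an operator dominated pointwise by the Hardy--Littlewood maximal function and is harmless on $L^p$ for $p>1$. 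With $A^k_tf=f*\mu^k_t$ and $\cM^kf=\sup_{t>0}|A^k_tf|$, and using that $\{\dil_t\}$ consists of automorphisms of $\bbH^n$ (so that the contributions of different dyadic ranges of $t$ are frequency-separated even though $\{\dil_t\Sigma\}$ is not dilation invariant when $\la\ne0$), matters reduce to proving $\|\cM^kf\|_p\lesssim2^{-\del k}\|f\|_p$ for $p>\tfrac{2n}{2n-1}$ with $\del=\del(p)>0$, and summing a geometric series. Because the $t$-dependence of $A^k_t$ enters only through the dilation $\dil_t$ acting on a frequency of size $\sim2^k$, the function $t\mapsto A^k_tf(x,u)$ has $t$-Fourier support of size $O(2^k)$ (for $(x,u)$ in the relevant region), so Bernstein's inequality in $t$ (after a cutoff to $t\in[\tfrac12,4]$) yields
\[
\|\cM^kf\|_{L^p(\bbH^n)}\ \lesssim\ 2^{k/p}\Bigl(\int_1^2\|A^k_tf\|_{L^p(\bbH^n)}^p\,dt\Bigr)^{1/p}+(\text{lower order}),
\]
and it suffices to establish the sharp bound
\[
\Bigl(\int_1^2\|A^k_tf\|_{L^p(\bbH^n)}^p\,dt\Bigr)^{1/p}\ \lesssim\ 2^{-k(2n-1)/p'}\,\|f\|_{L^p(\bbH^n)}\qquad\bigl(\tfrac{2n}{2n-1}<p\le2\bigr)
\]
together with its easier counterpart for $p\ge2$, because then $2^{k/p}\cdot2^{-k(2n-1)/p'}=2^{-\del k}$ precisely when $p>\tfrac{2n}{2n-1}$.

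To analyze $A^k_t$, note that --- writing $\Sigma$ also for its projection to $\bbR^{2n}$ --- the Heisenberg convolution unwinds to
\[
A_tf(x,u)=\int_{\Sigma}f\bigl(x-ty,\ u-t^2\la(y)-t\,x^{\intercal}Jy\bigr)\,\chi(y)\,d\sigma(y),
\]
a generalized Radon transform on $\bbR^{2n+1}$ which, because of the twist $x^{\intercal}Jy$, is \emph{not} translation invariant in $x$. The nonvanishing curvature of $\Sigma$ together with the nondegeneracy of $J$ make the associated canonical relation (over the family $t\in[1,2]$) a cone-type, cinematically curved object of the kind that the decoupling machinery can handle --- with the caveat that when $\la\ne0$ it acquires fold singularities; these are what force the $L^2$-based arguments of \cite{MuSe},\cite{NaTh} to lose derivatives and miss the optimal exponent, and they are invisible to $L^p$ methods. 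After a rescaling adapted to the frequency $2^k$ and the curvature scale of $\Sigma$, the space-time bound above becomes an inequality of local-smoothing type, accessible to decoupling.

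The crux is a decoupling inequality for the rescaled operator. For a constant-coefficient model of the relevant cone-type surface, Wolff's $\ell^p$-decoupling \cite{Wolff1},\cite{LaWo} in low dimensions and Bourgain--Demeter $\ell^2$-decoupling \cite{bourgain-demeter} in general produce exactly the $L^p$ square-function estimate which, combined with the elementary $L^p$-improving bounds for the individual frequency-localized pieces (the single caps), delivers the desired inequality. The obstruction is that the twist $x^{\intercal}Jy$ and the phase $t^2\la(y)$ make the operator genuinely \emph{variable-coefficient}, so the constant-coefficient decoupling does not apply directly. Following \cite{pr-se-var}, I would prove the required variable-coefficient decoupling by an induction on scales: at each stage partition the spatial ball into subballs of an intermediate radius, replace the phase on each subball by its affine (resp.\ quadratic) approximation, apply a nonlinear change of variables straightening the frozen operator into a rescaled copy of the constant-coefficient model, invoke the known decoupling at that scale, reassemble, and iterate down to scale $2^{-k}$ --- arranging the bookkeeping so that the total loss does not exceed $C_\epsilon2^{\epsilon k}$ for every $\epsilon>0$. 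Since decoupling is an $L^q$-statement for $q$ above a critical exponent $q_c\ge2$, the space-time bound is first obtained for $p\ge q_c$, and the remaining interval $\tfrac{2n}{2n-1}<p<\min(2,q_c)$ is then reached by interpolating with the $L^2$ bound of \cite{MuSe} (available since $n\ge2$) and with elementary endpoint estimates. Feeding this back into the Bernstein inequality and summing over $k$ completes the proof of Theorem~\ref{mainthm}.

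I expect the decisive difficulty to be precisely this variable-coefficient decoupling: identifying the nondegeneracy condition on the phase that encodes at once the curvature of $\Sigma$ and the Heisenberg twist --- and that survives the tilt $\la\ne0$, which is exactly what spoils the fold-free geometry of the case $\la=0$ --- and then controlling the errors introduced by the repeated nonlinear changes of variables carefully enough that the induction closes with only an $\epsilon$-loss. The surrounding ingredients --- the scaling and Littlewood--Paley reductions, the Bernstein step in $t$, the single-cap $L^p$-improving bounds, and the final interpolation down to the critical exponent --- should be routine, if lengthy.
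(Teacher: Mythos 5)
Your single-scale program (joint frequency decomposition, variable-coefficient cone decoupling proved by induction on scales with changes of variables as in \cite{pr-se-var}, combination with the $L^2$ fold estimates of \cite{GS}, \cite{MuSe}, and a Sobolev/Bernstein step in $t$) is essentially the paper's \S\ref{decouplingstepsect}--\S\ref{iterationsect}, and the numerology $2^{k/p}\cdot 2^{-k(2n-1)/p'}$ matches Corollary \ref{maxdyadic}. But there are two genuine gaps. The decisive one is the globalization in $t$. Your reduction ``different dyadic ranges of $t$ are frequency-separated, hence it suffices to prove $\|\cM^k f\|_p\lesssim 2^{-\delta k}\|f\|_p$ and sum'' does not close in the range that matters, $\tfrac{2n}{2n-1}<p<2$: after assigning to each dyadic block $t\sim 2^j$ the corresponding (central-variable) Littlewood--Paley piece of $f$, the summation you need is $\sum_j\|P_jf\|_p^p\lesssim\|f\|_p^p$, which is false for $p<2$ (dually it would give $\|\sum_j P_jg_j\|_{p'}\lesssim(\sum_j\|g_j\|_{p'}^{p'})^{1/p'}$ for $p'>2$, contradicted by lacunary examples). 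Moreover, for the joint frequency decomposition that your single-scale analysis actually requires, the central frequency of $\mu_k$ is spread over $|\tau|\lesssim 2^k$, so the claimed separation is not even available; and with a central-frequency-only decomposition your low piece $\mu^0$ is still a singular (codimension-one-type) measure, not dominated by Hardy--Littlewood. This is exactly the point where the paper does something different: in \S\ref{globalsect} it proves a weak-type $(p,p)$ bound by a Calder\'on--Zygmund decomposition adapted to the nonisotropic balls on $\bbH^n$, introduces cancellation into the bad part by mollification at the scale of each cube (Carbery's device \cite{carbery}), uses kernel $L^1$-regularity for the scales $m\le j-C_2k$, uses the restricted maximal estimate of Corollary \ref{maxdyadic} only for the $O(k)$ scales $j-C_2k<m\le j+C_1$ (so the factor $(1+k)2^{k(\frac{2n}{p}-2n+1)}$ is summable precisely for $p>\tfrac{2n}{2n-1}$), and handles the good part with the global $L^2$ bound from \cite{MuSe}. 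Some such weak-type/CZ mechanism is needed; the frequency-separation shortcut is not a proof.

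The second gap is how you reach exponents below $2$ in the single-scale estimate. Decoupling gives the gain at large $q$ (here $q>\tfrac{4n}{2n-1}$), and you propose to cover $\tfrac{2n}{2n-1}<p<2$ by ``interpolating with the $L^2$ bound of \cite{MuSe} and elementary endpoint estimates.'' That cannot work quantitatively: for $\la\neq 0$ the fixed-time $L^2$ bound carries the fold loss (the $2^{l/2}$ factor, i.e.\ $2^{k/6}$ in the worst case), and interpolation of that with trivial $L^1$ or $L^\infty$ bounds lands you back at the non-optimal range of \cite{MuSe} --- this is precisely the obstruction the decoupling is meant to circumvent, so it cannot be reinstated at the last step. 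The paper instead transfers the large-$q$ gain downward by duality of the fixed-time convolution operator (Proposition \ref{dyadicq-Sob}: $L^q$ for $q>\tfrac{4n}{2n-1}$ gives $L^p$ for $p<\tfrac{4n}{2n+1}$, and $\tfrac{4n}{2n+1}>\tfrac{2n}{2n-1}$ for $n\ge2$), and only then applies the Sobolev embedding in $s$; the $L^2$ fold estimate and the $L^\infty$ bound enter earlier, inside the interpolation \eqref{interpol} at the finest scale of the induction, not as a substitute for duality.
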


{\it Remark:} It is  instructive to note that  an analogous  stability  result may fail for other more regular measures.
For example,  if we let $\nu_\la$ be the measure on $V_\la$ given by $\bbone_B dS$ where $dS$ is the 
$2n$-dimensional Lebesgue measure on $\bbV_\la$ and  $\bbone_B$ is  the characteristic function of the unit ball centered at the origin.
Then results on  maximal and singular Radon transforms \cite{PS} show that for $\la=0$ the maximal operator
$f\mapsto \sup_t |f*\dil_t\nu_0|$ is bounded on $L^p(\bbH^n)$ for $1<p<\infty$. 
However for $\la\neq 0$ the maximal operator 
$f\mapsto \sup_t |f*\dil_t\nu_\la |$ is bounded on $L^p(\bbH^n)$  only for $\frac{2n+1}{2n}<p<\infty$. The local analogue of the latter maximal operator (with dilations parameters in $[1,2]$) shares some properties  with the spherical maximal operator 
(\cf. \cite{SS}), due to the rotation effect of the  nonisotropic dilation structure. One has an example that shows unboundedness that is similar to the example that shows unboundedness of the spherical maximal operator 
$L^p(\bbR^d)$, when $p\le \frac{d}{d-1}$ (cf. \cite{Steinbook3}, \cite{SS}).

\medskip

 {\it This paper.}  In \S\ref{outline} 
   consider regularizations of the measure defined by  dyadic 
 frequency decompositions 
and prove a crucial  $L^p$-Sobolev inequality 
%$L^q\to L^q_{d-1}/q$-Sobolev result for  $q> \frac{2d}{d-1}$, $d=2n+1$  
for  
the convolution $f*\mu$  when acting on  $L^p$ functions with compact support. As a consequence we obtain an estimate for  a restricted version of the  maximal operator where the dilation parameter is taken in a compact subinterval of $\bbR^+$.  In \S\ref{decouplingstepsect} 
 we describe the basic decoupling step.
In \S\ref{iterationsect} 
an iteration and combination with known $L^2$ estimates leads to the proof of  the main Proposition \ref{dyadicq-Sob}. 
In \S\ref{globalsect} we use Calder\'on-Zygmund type arguments to extend this result to obtain Theorem \ref{mainthm}. The appendix contains a basic integration by parts lemma which is useful in checking the details of the decoupling step in \S\ref{decouplingstepsect}.
 
 \section{Main estimates}\label{outline}
 The convolution 
 $f_1*f_2(x,u)=\int f_1(y,v) f_2((y,v)^{-1}(x,u)) dydv$  on the Heisenberg group can be written as 
 %The convolution on the Heisenberg group is  given by
 \Be\label{Hconvol}\begin{aligned} f_1*f_2(x,u)&= \int f_1(y,v) f_2(x-y, u-v+x^\intercal J y) dy dv
 \\
 &= \int f_2(y,v) f_1(x-y, u-v-x^\intercal Jy) dydv
 \end{aligned}\Ee
 Here $J$ is a nondegenerate skew symmetric matrix on $\bbR^{2n}$.
 
 Split $x=(\ux, x_{2n})$ where $\ux\in \bbR^{2n-1}$. We consider a localization of the measure to a graph 
 $x_{2n}=g(\ux)$ on $V_\la$,  where the Hessian of $g$ is nondegenerate. We will  use permutation of variables to reduce to this situation (\cf. the remark in \S\ref{iterationsect}).

 % We split variables $x=(\ux,x_{2n})$, $y=(\uy,y_{2n})$ and let \footnote{This is not really necessary, but allows us to conveniently reference things in \cite{MuSe}}.
%\footnote{\color{red}Make a comment as in \cite{MuSe} - the argument is like in section 3.2. Refer to a later section, maybe 4.\color{black}}
 
The localized  measure $\mu$ can be represented as an oscillatory integral distribution by
%as an oscillatory integral distribution by
\Be\label{measosc}\eta(x,u)
 \iint
e^{i(\sigma(x_{2n}-g(\ux))+\tau (u-\la(x)))}
 d\sigma d\tau
\Ee
where $\eta $ is a smooth compactly supported  function.

Let $\vs_0\in C^\infty_0(\bbR)$ be an even smooth function  such  that
$\vs_0(s)=1$ if $|s|\le 1$ and such that the support of $\vs_0$ is contained in $(-2,2)$.
Let  $\vs_1(s)=\vs_0(s/2)-\vs_0(s)$ and let, for
$k\ge 1$,  $\zeta_k(s)= \vs_1(2^{1-k} s) $. Also let $\zeta_0=\vs_0$.  Then for $k\ge 1$,  
$\zeta_k$ is supported in $\{s: 2^{k-1}\le |s|\le 2^{k+1}\}$, $k\ge 1$, and we have 
$\sum_{k=0} \zeta_k=1$.  

Let 
\Be \label{mukla}
\mu _k(x,u)= \eta(x,u) \iint e^{ i (\sigma(x_{2n}-g(\ux)) +\tau (u-\la(x)))} \zeta_k  ( (\sigma^2+\tau^2)^{1/2})
\, d\sigma\, d\tau
\Ee
 and we decompose \eqref{measosc} as $\sum_{k=0}^\infty \mu_k$ in the sense of distributions.
 
 The maximal function $\sup_{t>0} |f*\dil_t\mu_k|$ is dominated by $C(k)$ times the analogue of the Hardy-Littlewood maximal function on the Heisenberg group.
 Therefore it suffices to consider the case $k\gg1$.
 
 Our main proposition will be

 \begin{proposition} \label{dyadicq-Sob} 
 (i) For $q> \frac{4n}{2n-1}$, 
 $$\|f*\mu_k\|_{L^q(\bbH^n)}  \lc 2^{-k\frac{2n-1}q} \|f\|_{L^q(\bbH^n)}\,.$$
 
 (ii) For $p<\frac{4n}{2n+1} $
$$\|f*\mu_k\|_{L^p(\bbH^n)}  \lc 2^{-k(2n-1)(1-\frac 1p)} \|f\|_{L^p(\bbH^n)}\,.$$
 Moreover,  
 for $1\le s\le 2$ 
 $$\Big\|\frac{d}{ds} f*\dil_s\mu_k\Big\|_{L^p(\bbH^n)}  \lc 2^k 2^{-k(2n-1) (1- \frac 1p)} \|f\|_{L^p(\bbH^n)}\,.$$
 
 The implicit constants are uniform if $\la$ is taken from a compact subset of $(\bbR^{2n})^*$.
  \end{proposition}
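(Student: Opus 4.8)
The plan is to deduce part~(ii) from part~(i) by duality together with a symbol observation for the $s$-derivative, and to prove part~(i) by the ``induction on scale'' decoupling scheme announced in the introduction. For part~(i) I would, at scale $2^k$, decompose the convolution into curved pieces adapted to the Hessian of $g$, apply a rescaled form of the decoupling inequalities of Wolff and Bourgain--Demeter after a nonlinear change of variables that straightens each piece, then rescale each piece to unit frequency scale so that it becomes an instance of the \emph{same} operator at the smaller scale $2^{k/2}$, and invoke the inductive hypothesis; the known $L^2\to L^2$ smoothing estimates of \cite{MuSe} enter to start the recursion and to remove the $2^{k\varepsilon}$-losses produced by decoupling.

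\textbf{Reductions and the passage (i)$\Rightarrow$(ii).} Because each $\mu_k$ has fixed compact support, a partition of $f$ into unit cubes reduces~(i) to the same inequality for $f$ supported in a fixed ball. After the permutation of the $x$-variables mentioned in \S\ref{iterationsect} one may assume $\Sigma=\{x_{2n}=g(\ux)\}$ with $g$ smooth, uniformly bounded in $C^N$ for a large fixed $N$, and $|\det D^2g|$ bounded below; the Heisenberg cocycle $x^\intercal Jy$, restricted to $\Sigma$, is an affine function of position on $\Sigma$ and therefore enters no curvature computation, contributing only a bounded shear. All constants then depend on finitely many derivatives of $g$ and $\lambda$ and vary continuously with $\lambda$, which gives the asserted uniformity by compactness; moreover the whole scheme applies verbatim to any measure of the form \eqref{mukla} with a symbol of order zero in $(\sigma,\tau)$, so~(i), and hence~(ii), hold for that larger class. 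For part~(ii) I would use that the transpose of $f\mapsto f*\mu_k$ is convolution with the reflected measure $\mu_k^{\vee}(x,u)=\mu_k(-x,-u)$, which is again of the form \eqref{mukla} with $\Sigma$ replaced by $-\Sigma$: applying~(i) to $\mu_k^{\vee}$ and dualizing gives the first estimate of~(ii) for $p=q'<\tfrac{4n}{2n+1}$, the exponent $(2n-1)/q'$ being $(2n-1)(1-\tfrac1p)$. For the $s$-derivative one checks from \eqref{mukla} that, uniformly for $s\in[1,2]$, $\tfrac{d}{ds}\dil_s\mu_k$ is $2^k$ times a measure of the form \eqref{mukla} (with $\lambda$ replaced by $s\lambda$) plus lower-order terms; combined with the covariance of Heisenberg convolution under the automorphic dilations and with the first estimate of~(ii) for that class, this yields the stated derivative bound.

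\textbf{The decoupling step and the iteration.} By \eqref{Hconvol} and \eqref{mukla}, $f*\mu_k$ is an oscillatory integral operator whose frequency variables $(\sigma,\tau)$ lie in the annulus $\{(\sigma^2+\tau^2)^{1/2}\sim 2^k\}$ and whose phase is built from $\sigma(x_{2n}-g(\ux))+\tau(u-\lambda(x))$ and the shear. Subdividing the $\ux$-domain into caps of diameter $\sim 2^{-k/2}$ produces a splitting $f*\mu_k=\sum_\nu T_\nu f$; on each cap, after the anisotropic affine rescaling dictated by $D^2g$ and after the integration-by-parts estimate of the appendix, the piece is modelled on a curved plate to which the constant-coefficient decoupling theorem applies. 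Freezing the $(x,u)$-dependent surface on balls of reciprocal size and summing the localized inequalities should give, for $q$ in the range of the decoupling theorem,
\[
\Norm{f*\mu_k}_{L^q}\ \lesssim\ 2^{k\varepsilon}\Big(\sum_\nu\Norm{T_\nu f}_{L^q}^2\Big)^{1/2}.
\]
Rescaling $T_\nu f$ to unit frequency scale turns it into a piece of $f_\nu*\mu'$ with $\mu'$ of the form \eqref{mukla} at frequency scale $2^{k/2}$; applying the inductive form of~(i) at that scale, summing over $\nu$, and unrolling the recursion ($O(\log k)$ steps) gives $\Norm{f*\mu_k}_{L^q}\lesssim 2^{k\varepsilon}2^{-k(2n-1)/q}\Norm{f}_{L^q}$ for $q>\tfrac{4n}{2n-1}$ --- the strict inequality $q>\tfrac{4n}{2n-1}$ being exactly what keeps the recursion convergent below the single-step decoupling exponent. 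Combining this $\varepsilon$-lossy bound with the $L^2$-smoothing estimates of \cite{MuSe} (which carry a strictly positive power gain, uniformly in $\lambda$) removes the $2^{k\varepsilon}$ and yields~(i), hence also~(ii).

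\textbf{Main obstacle.} The hard part will be the iteration: one must verify that the anisotropic rescaling reproduces at scale $2^{k/2}$ a surface obeying the same quantitative hypotheses --- graph form, Hessian bounded above and below, bounded $C^N$-norms, the $J$-shear remaining affine and bounded --- so that the induction genuinely closes and the constants accumulated over $O(\log k)$ stages remain controlled, and that the final combination with the $L^2$ estimates indeed erases the $2^{k\varepsilon}$. Nearly as delicate is the passage, in the decoupling step, from the constant-coefficient theorem to the curved, $(x,u)$-dependent family of surfaces $S_{x,u}$: choosing the correct freezing scale, controlling the error terms via the appendix lemma, and summing the localized decouplings without losing more than $2^{k\varepsilon}$. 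These points are exactly what \S\ref{decouplingstepsect} and \S\ref{iterationsect} carry out.
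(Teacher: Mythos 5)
Your reduction of (ii) to (i) by duality, the treatment of the $s$-derivative, and the globalization via a unit-cube partition all match the paper, but the core of part (i) has a genuine gap: you have misidentified the geometry on which decoupling is applied, and your scheme contains no mechanism that produces the critical exponent $\frac{4n}{2n-1}$. The paper does \emph{not} decouple $\mu_k$ into caps of diameter $2^{-k/2}$ on $\Sigma$: because the underlying canonical relation has a fold, it first performs the further frequency decomposition $\mu_k=\sum_{0\le l\le [k/3]}\mu_{k,l}$ in the $\sigma$-variable (with $|\sigma|\approx 2^{k-l}$), where $l$ measures the distance to the fold, and the decoupling (Proposition \ref{lpdecouplingprop}) is applied to plates of angular width $\delta_1$ along the cones $\varSigma_x=\{(\tau J\Gamma^x(\uy),\tau)\}$ of \eqref{varSigmacone}, i.e.\ the fold fibers, whose curvature comes from the interaction of $J$ with $D^2g$. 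Your assertion that the cocycle $x^\intercal Jy$ ``enters no curvature computation, contributing only a bounded shear'' is therefore wrong in substance: the decoupled cones are $\tau(J\Gamma(\uy),1)$, and making the off-plate errors small requires the quadratic shear $u\mapsto u+\tfrac12\inn{Sx}{x}$ with the condition $SJ^\#e_{2n}=-e_{2n}$ of \eqref{S-assumpt}, together with the directional-derivative bounds of Lemma \ref{derivPhi}. The sharp threshold then arises only at the end: for each $l$ the iterated decoupling reduces to pieces supported on cubes of side $\approx 2^{-l(1-\eps)}$, which are estimated by the $L^2$ fold estimate of \cite{GS}, \cite{MuSe} (carrying a loss $2^{l/2}$) interpolated with a trivial $L^\infty$ bound, giving $2^{l(\frac{2n}{q}-\frac{2n-1}{2}+\eps)}2^{-k\frac{2n-1}{q}}$ as in Proposition \ref{muklestprop}; the condition $q>\frac{4n}{2n-1}$ is exactly what makes this summable in $l$. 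In your proposal the $\eps$-loss is $2^{k\eps}$ for the whole of $\mu_k$, and your claim that it can be ``removed'' by combining with the $L^2$ estimates is not backed by any numerology (the $L^2$ bound for the full $\mu_k$ itself loses $2^{k/6}$ from the piece nearest the fold), while your explanation that $q>\frac{4n}{2n-1}$ ``keeps the recursion convergent'' does not correspond to any actual estimate.

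The second gap is the induction itself. You rescale each $2^{-k/2}$-cap ``to unit frequency scale'' so that it ``becomes an instance of the same operator at the smaller scale $2^{k/2}$'' and induct on frequency. On $\bbH^n$ with $\la\neq 0$ this self-similarity is precisely what fails: the automorphic dilations do not act on $V_\la$, so parabolic rescaling does not reproduce the same class of convolution operators, and nothing in your sketch verifies that the rescaled surface, the shear, and the measure \eqref{mukla} return to the original quantitative hypotheses. The paper's iteration in \S\ref{iterationsect} is of a different nature: the frequency stays at $2^k$ and the parameter $l$ is fixed; what is refined is the spatial cube decomposition in $\uy$, with side lengths $\delta_j=2^{-m_j}$, $m_{j+1}=\lfloor\frac{l+m_j}{2}\rfloor$, and at each of the $O(\eps^{-1}+\log l)$ stages one re-centers by the affine change of variables of \S\ref{changesofvar} (which keeps the phase in the form \eqref{vphidef} with a new $g$ satisfying $\nabla g(0)=0$ and a new admissible skew form $B^\intercal JB$) before applying the Bourgain--Demeter step again. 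Without the $l$-decomposition, the fold-fiber cones, and this fixed-frequency spatial iteration, the argument as you outline it does not close.
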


 A well known Sobolev imbedding argument gives a sharp bound for the restricted maximal function:
  \begin{corollary} \label{maxdyadic}  For $ p<\frac{4n}{2n+1}$,
 
 $$\big \|\sup_{1/2<s<2} |f* \dil_s \mu_k | \big\|_{L^p(\bbH^n)} \lc 2^{k (\frac{2n}p-2n+1)} \|f\|_{L^p(\bbH^n)}.$$
 \end{corollary}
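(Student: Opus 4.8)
The plan is to derive Corollary~\ref{maxdyadic} from Proposition~\ref{dyadicq-Sob}(ii) by the standard Sobolev embedding trick that converts a fixed-time estimate plus a derivative estimate into a supremum estimate. Write $F(s,\cdot)=f*\dil_s\mu_k$ and recall the one-dimensional embedding $\sup_{s\in[1/2,2]}|G(s)|^p\lc \int_{1/2}^2|G(s)|^p\,ds+\big(\int_{1/2}^2|G(s)|^p\,ds\big)^{1-1/p}\big(\int_{1/2}^2|G'(s)|^p\,ds\big)^{1/p}$, valid for $G\in W^{1,p}[1/2,2]$ (or more simply $|G(s_0)|^p\le 2\int|G|^p+2\int|G||G'|$ after integrating $\frac{d}{ds}|G|^p$). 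Applying this pointwise in the space variable $(x,u)\in\bbH^n$ with $G(s)=F(s,(x,u))$, integrating over $\bbH^n$, and using H\"older's inequality in $s$ gives
\Be\label{maxsob}
\big\|\sup_{1/2<s<2}|F(s,\cdot)|\big\|_{L^p(\bbH^n)}^p
\lc \int_{1/2}^2\|F(s,\cdot)\|_{L^p}^p\,ds
+\Big(\int_{1/2}^2\|F(s,\cdot)\|_{L^p}^p\,ds\Big)^{1-\frac1p}
\Big(\int_{1/2}^2\|\partial_sF(s,\cdot)\|_{L^p}^p\,ds\Big)^{\frac1p}.
\Ee

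Next I would feed in the two estimates from Proposition~\ref{dyadicq-Sob}(ii). For fixed $s\in[1/2,2]$ the dilation $\dil_s$ is a fixed invertible linear change of variables on $\bbH^n$ with Jacobian $s^{2n+2}$ comparable to $1$, so $\|f*\dil_s\mu_k\|_{L^p}=\|(\dil_{1/s}f)*\mu_k\|_{L^p}\cdot s^{\text{(const)}}\lc 2^{-k(2n-1)(1-1/p)}\|f\|_{L^p}$, i.e. the first bound in~(ii) holds uniformly for $s$ in $[1/2,2]$ (not just $s=1$) by rescaling; the range $s\in[1,2]$ for the derivative bound is handled directly by (ii), and $s\in[1/2,1]$ again by the same rescaling. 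Substituting these into \eqref{maxsob}, the first term contributes $2^{-k(2n-1)(1-1/p)\cdot p}\|f\|_{L^p}^p$ and the second contributes $\big(2^{-k(2n-1)(1-1/p)p}\|f\|_{L^p}^p\big)^{1-1/p}\big(2^{kp}2^{-k(2n-1)(1-1/p)p}\|f\|_{L^p}^p\big)^{1/p}=2^{k}2^{-k(2n-1)(1-1/p)p}\|f\|_{L^p}^p$. Taking $p$-th roots, the dominant (second) term yields
\[
\big\|\sup_{1/2<s<2}|f*\dil_s\mu_k|\big\|_{L^p(\bbH^n)}
\lc 2^{k/p}\,2^{-k(2n-1)(1-\frac1p)}\|f\|_{L^p(\bbH^n)}
=2^{k(\frac1p-(2n-1)+\frac{2n-1}p)}\|f\|_{L^p}
=2^{k(\frac{2n}p-2n+1)}\|f\|_{L^p},
\]
which is exactly the claimed exponent, and one checks the first term's exponent $-(2n-1)(1-1/p)\le \frac{2n}p-2n+1$ so it is dominated. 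The uniformity of the constant as $\la$ ranges over a compact set is inherited directly from the corresponding uniformity in Proposition~\ref{dyadicq-Sob}.

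I do not expect a genuine obstacle here; this is the routine ``Sobolev embedding in the dilation parameter'' argument and the only points requiring a line of care are (a) checking that the fixed-time $L^p$ bound and the derivative bound are available for \emph{all} $s\in[1/2,2]$ rather than only the normalization in the Proposition, which follows from the scaling invariance of the $L^p$ norm under $\dil_s$ for $s$ bounded above and below, and (b) arithmetic bookkeeping of exponents to confirm that the cross term in the embedding inequality is the one that produces the stated power $2^{k(2n/p-2n+1)}$ and that the other term is no larger. If one prefers to avoid the $W^{1,p}$ embedding one can instead discretize $s$ over an arithmetic progression of step $2^{-k}$ and sum, using the derivative bound to control the oscillation within each subinterval; this gives the same result with the same exponent and is the form most directly suggested by the $2^k$ factor in front of the derivative estimate in Proposition~\ref{dyadicq-Sob}(ii).
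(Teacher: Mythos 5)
Your proposal is correct and is essentially the paper's own argument: the paper obtains Corollary \ref{maxdyadic} from Proposition \ref{dyadicq-Sob}(ii) by exactly this standard Sobolev embedding in the dilation parameter (it states this in one line without spelling out the details). Your exponent bookkeeping, which identifies the cross term $2^{k/p}2^{-k(2n-1)(1-\frac1p)}=2^{k(\frac{2n}{p}-2n+1)}$ as the dominant contribution, and your use of the automorphism property of $\dil_s$ to get the fixed-time and derivative bounds uniformly for $s\in[1/2,2]$, are both in order.
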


We use a further decomposition in the $\sigma$-variables, as in \cite{MuSe}.
Let  \begin{align*}
\zeta_{k,0}(\sigma,\tau)&=\vs_1(2^{-k}\sqrt{\sigma^2+|\tau|^2})(1-\vs_0(2^{-k}\sigma))\\
\zeta_{k,l}(\sigma,\tau)&=\vs_1(2^{-k}\sqrt{\sigma^2+|\tau|^2})\vs_1(2^{l-k}\sigma)
\\
\widetilde \zeta_{k}(\sigma,\tau)&=\vs_1(2^{-k}\sqrt{\sigma^2+|\tau|^2})
\vs_0(2^{[k/3]-k-1}\sigma).
\end{align*}
so that 
$$\zeta_k  ( (\sigma^2+\tau^2)^{1/2})
=\widetilde \zeta_{k}+\sum_{0\le l<k/3}\zeta_{k,l}
.$$

Let $\mu_{k,l}$ be defined as in \eqref{mukla} but with $\zeta_k  ( (\sigma^2+\tau^2)^{1/2})$
replaced by $\zeta_{k,l}$ when $l<k/3$ and by $\widetilde \zeta_k$ when $l=[k/3]$.
We shall prove the following refined version of Proposition \ref{dyadicq-Sob}.
% Then we shall prove
\begin{proposition} \label{muklestprop}
Let $\eps>0$ and $2\le q< \frac{4n+2}{2n-1}$. Then there is $C_\eps>0$ such that for $0\le l\le [k/3]$,
$$\|f*\mu_{k,l}\|_q\le C_\eps 2^{-k \frac{2n-1}q} 2^{l (\eps+\frac {2n}{q}-\frac {2n-1}2)} \|f\|_q.$$
\end{proposition}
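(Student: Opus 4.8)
\emph{Plan of proof.} The plan is to realize $f*\mu_{k,l}$ as a variable-coefficient oscillatory integral operator, to prove an $\ell^2$-decoupling inequality for it by the iteration announced in the introduction, and then to feed in the plate-wise $L^2$ estimates of \cite{MuSe}. Combining \eqref{Hconvol} with \eqref{mukla} and integrating out the central variable, write $f*\mu_{k,l}=\cA_{k,l}f$, where the phase of $\cA_{k,l}$ is linear in the frequency variables $(\sigma,\tau)$, couples the base variables bilinearly through the twist $x^\intercal Jy$ and through $g,\la$ in the remaining slots, and the amplitude is supported where $|\sigma|\sim 2^{k-l}$, $|\tau|\sim 2^k$. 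Record the two inputs that will be used for $\cA_{k,l}$ and, suitably rescaled, for each of its plates: the trivial bound $\|\cA_{k,l}\|_{L^1\to L^1}=\|\cA_{k,l}\|_{L^\infty\to L^\infty}\lc\|\mu_{k,l}\|_1\lc 1$, and the $L^2$ estimate supplied by the analysis of the Fourier integral operators with folding canonical relations in \cite{MuSe}, which at the level of one $\mu_{k,l}$ reads $\|\cA_{k,l}f\|_2\lc 2^{-k(2n-1)/2}2^{l/2}\|f\|_2$ --- the case $q=2$ of the proposition, with the factor $2^{l/2}$ reflecting the loss of curvature in the range $|\sigma|\sim 2^{k-l}$.

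The heart of the matter is the decoupling inequality
\[
\|\cA_{k,l}f\|_q\ \lc_\eps\ 2^{l\eps}\Big(\sum_{\theta}\|\cA_{k,l}^\theta f\|_q^2\Big)^{1/2},\qquad 2\le q\le\tfrac{4n+2}{2n-1},
\]
where $\theta$ runs over a decomposition of the $\uy$-variable into $\sim 2^{l(2n-1)}$ caps of size $2^{-l}$ and $\cA_{k,l}^\theta$ is the associated localization. The exponent $\tfrac{4n+2}{2n-1}$ is the critical one for Bourgain--Demeter $\ell^2$-decoupling for a hypersurface with $2n-1$ nonvanishing principal curvatures --- here the graph $x_{2n}=g(\ux)$, whose Hessian is nondegenerate --- and $2^l=\tau/\sigma$ is the scale one decouples to (not the finest scale $2^{-(k-l)/2}$, which would be too lossy), so the decoupling constant is $O_\eps(2^{l\eps})$. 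Because the phase is genuinely variable-coefficient rather than an exact model, the inequality cannot be quoted directly; following \cite{pr-se-var} one proves it by iterating the decoupling step of \S\ref{decouplingstepsect} over the dyadic scales between $1$ and $2^{-l}$, at each stage freezing a cap, performing an affine plus nonlinear change of variables that renormalizes the relevant portion of the phase to a unit-scale model obeying the nondegeneracy hypotheses with uniform constants, applying the constant-coefficient decoupling inequality at the appropriate intermediate scale, and recursing. The integration by parts lemma of the appendix certifies that the renormalized amplitudes stay in a fixed symbol class with uniform seminorms, so the accumulated constant remains $O_\eps(2^{l\eps})$. Carrying out these changes of variables and controlling the error terms uniformly across all scales and all caps is the step I expect to be the main obstacle.

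It remains to bound each $\cA_{k,l}^\theta$ and reassemble. After the change of variables straightening $\theta$, $\cA_{k,l}^\theta$ is again an operator of the kind handled in \cite{MuSe} but at reduced frequency parameters; the rescaled $L^2$ estimate there, together with $\|\cA_{k,l}^\theta\|_{L^\infty\to L^\infty}\lc\|\mu_{k,l}^\theta\|_1\lc 2^{-l(2n-1)}$ and interpolation, gives
\[
\|\cA_{k,l}^\theta\|_{L^q\to L^q}\ \lc\ 2^{\frac2q\left(-\frac{k(2n-1)}{2}+\frac l2\right)}\,2^{\left(1-\frac2q\right)\left(-l(2n-1)\right)}.
\]
Inserting this into the decoupling inequality, using the orthogonality of the pieces in the form $\big(\sum_\theta\|\cA_{k,l}^\theta f\|_q^2\big)^{1/2}\lc(\#\theta)^{1/2-1/q}\,\sup_\theta\|\cA_{k,l}^\theta\|_{L^q\to L^q}\,\|f\|_q$ (valid for $q\ge2$), and simplifying the exponents, one obtains exactly $2^{-k(2n-1)/q}\,2^{l(2n/q-(2n-1)/2)}$, the decoupling loss being absorbed into the stated $2^{l\eps}$. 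All constants depend on $g$ and on $\la$ only through finitely many derivatives and through the bounded coefficients of $\la$, so the estimate is uniform as $\la$ ranges over a compact subset of $(\bbR^{2n})^*$. (Proposition~\ref{dyadicq-Sob}(i) then follows by summing the geometric series in $l$ for $q>\tfrac{4n}{2n-1}$ and by interpolation with the trivial bounds for $q\ge\tfrac{4n+2}{2n-1}$.)
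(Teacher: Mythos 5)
Your endgame matches the paper's: the per-cap $L^2$ bound from the folding FIO estimates of \cite{MuSe}, \cite{GS}, the $L^\infty$ bound $\lc 2^{-l(2n-1)}$ coming from the $2^{-l}$-cap support, interpolation, and H\"older over the $\sim 2^{l(2n-1)}$ caps reproduce \eqref{L2appl}--\eqref{decapplcomb}, and your exponent arithmetic is correct. The gap is the displayed decoupling inequality, which is the heart of the matter and is asserted in a form that neither follows from Bourgain--Demeter nor is what the iteration actually yields. The frequency support of $\mu_{k,l}$ has relative thickness $2^{-l}$ about the cone ($|\sigma|\sim 2^{k-l}$, $|\tau|\sim 2^k$), so a single application of constant-coefficient $\ell^2$ decoupling only reaches the canonical angular scale $2^{-l/2}$; your caps of width $2^{-l}$ lie below that scale, and for general functions whose Fourier transforms live in the resulting width-$2^{-l}$, thickness-$2^{-l}$ slabs, an $\ell^2$ decoupling with constant $O_\eps(2^{l\eps})$ is false (passing below the canonical scale costs a flat-decoupling factor). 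Your claim that ``the accumulated constant remains $O_\eps(2^{l\eps})$'' is therefore precisely the unproved point. What the paper proves is different: Proposition \ref{lpdecouplingprop} is an $\ell^q$ statement, each step goes from cubes of size $\delta_0$ only down to $\delta_1\ge(2^{-l}\delta_0)^{1/2}$ (forcing the $\approx\log l$-step recursion of \S\ref{iterationsect}), and each step costs $(\delta_0/\delta_1)^{(2n-1)(\frac12-\frac1q)+\eps_1}$, so the accumulated loss is $2^{l[(2n-1)(\frac12-\frac1q)+\eps]}$ --- exactly the factor you reintroduce by hand via H\"older. Moreover the input that legitimizes Bourgain--Demeter at every stage --- that after localizing $(x,u)$ at scale $2^{-l}$ and applying the quadratic shear with $SJ^\#e_{2n}=-e_{2n}$ (see \eqref{S-assumpt}), each localized piece has Fourier support in a plate of thickness $\delta_1^2$, verified through Lemma \ref{derivPhi} and the integration-by-parts error analysis --- is entirely absent from your sketch, and it is the main content of \S\ref{decouplingstepsect}.

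The repair is essentially notational: since your assembly immediately applies H\"older to pass from $\ell^2$ to a supremum over caps, nothing is lost by replacing your claimed inequality with the $\ell^q$ decoupling with loss $2^{l[(2n-1)(\frac12-\frac1q)+\eps]}$ that the paper's iteration actually delivers; with that substitution (and the tiling argument needed to pass from compactly supported $f$ to all of $L^q(\bbH^n)$) your argument becomes the paper's proof. As written, however, the central inequality is stronger than anything established, its stated justification is incorrect, and no mechanism is offered for the changes of variables, the shear, and the per-step scale restriction \eqref{delta0-1} that make the multi-scale decoupling work.
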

If $\frac{4n}{2n-1}<q<\frac{4n+2}{2n-1}$ and $\eps>0$ is sufficiently small then we can sum in $l$ and obtain part (i) of Proposition \ref{dyadicq-Sob}.
The $L^p$ inequality for $p<\frac{4n}{2n+1}$ follows by duality and the estimate for 
$\frac{d}{ds} f*\dil_s\mu_k$ is  proved similarly.

\medskip

{\it Remark.} We also have, by interpolation with an easy $L^\infty$ estimate,
\begin{align}
\|f*\mu_{k,l}\|_q\le C_\eps 2^{-k \frac{2n-1}q} 2^{-l \frac{1-\eps}q} \|f\|_q, \quad \frac {4n+2}{2n-1}\le q\le \infty.
\end{align}

 \section{\it Background and idea of the proof}  The idea in the proof of Proposition 
 \ref{dyadicq-Sob} 
 is to consider the fibers of the fold surface which curved  varying cones; this goes back to the paper \cite{GS} by Greenleaf and one of the authors which dealt with $L^2\to L^p$ inequalities for classes of generalized Radon transforms. One  then would like to apply decoupling for localizations to plates adapted to neighborhoods of these cones. The  cones vary with the base points and some approximation and preparations via changes of variables have  to be used, \cf.   \cite{pr-se-var}.

Concretely if  $\chi_1(x,u)$ and $\chi_2(y,v)$ are compactly supported $C^\infty_c$ functions 
we want to examine the functions  $ ( f\chi_2)*\mu_{k,l} (x,u) \chi_1(x,u)$ 
which are written   in the  form
$$\int \cK_{k,l}(x,u,y,v) f(y,v) dy\, dv$$ where the Schwartz kernel $\cK_{k,l}$ is given by 
\[\chi_1(x,u)\chi_2(y,v) 
 %\chi(u-u^\circ)
 %\chi_0(2^l(x,u-u^\circ)) 
 \iint \zeta(2^{l-k}\sigma)\chi_1(2^{-k} \sqrt{\sigma^2+\tau^2})
 e^{i  \vphi(\sigma,\tau,x,u,y,v)} d\sigma d\tau
  \]
and the phase function is defined by 
 \Be\label{vphidef}
 \vphi(\sigma,\tau,x,u,y,v)=
 \sigma(x_{2n}-y_{2n}-g(\ux-\uy))+\tau(u-v+ x^\intercal \!Jy)
 \Ee
where $J$ is a skew symmetric nondegenerate $2n\times 2n$ matrix 
(for example a 
%$\begin{pmatrix} 0&-I\\I&0\end{pmatrix}$ or a 
skew symmetric perturbation of the standard symplectic matrix). Note we do not assume that $J$ is orthogonal.

With $\vphi(\sigma,\tau, x,u,y,v)$ as in \eqref{vphidef} the cones in question are given, 
for each $(x,u) $, by
\begin{align}
\notag
%\varSigma_{(x,u)}=
&\{(\vphi_x, \vphi_u):\,\, \sigma=0, \, x_{2n}-y_{2n}-g(\ux-\uy)=0,\, 
v=u+x^\intercal Jy \}
\\
\notag
\label{cone-family}
=&\{ (\tau Jy, \tau): \,\,  y_{2n}=x_{2n}-g(\ux-\uy) \} 
%:= \varSigma_x
\end{align}
which is actually independent of $u$.  Denote this conic surface by $\Sigma_x$ and let 
%We shall also set 
\Be \label{Gammaxdefinition}
\Gamma^x(\uy)= (\uy, x_{2n} -g(\ux-\uy)).
\Ee
Then
\Be \label{varSigmacone}
\varSigma_x =\{(\tau J\Gamma^x(\uy),\tau)\}.
\Ee
We wish to use the decoupling inequalities in \cite{bourgain-demeter} (or  the previous paper \cite{LaWo}  if $n$ is sufficiently large)
for thin neighborhoods of the cones  $\Sigma_x$, for suitable frozen $x$. Note that by our assumptions on $g$ the cones are maximally curved (i.e. $d-2=2n-1$  principal curvatures are nonzero). The basic decoupling step will be described in the next section.

% \subsection {Decoupling results for conic manifolds}\label{cc-decoupling} \color{red} Cite the precise quanitative version of the Bourgain-Demeter result.\color{black}
 %This is supposed to contain the result, with proof, of the precise decoupling result  that is needed in the next section.  And we need it in a form that is citable below. Ideally just derive it from the Bourgain-Demeter result for the paraboloid in $\bbR^{d}$,   following the rescaling and induction on scales arguments in \cite{prs} and \cite{bourgain-demeter}.  We need the result for the cones \eqref{varSigmacone} with uniformity in $g$,  $J$ as both are ranging over suitable  compact sets of $C^4$ (or $C^5$, or $C^6$?) functions and antisymmetric nondegenerate matrices, respectively. This does not necessarily have to be included here, if it can be quoted from another paper. \color{black}
 
%\subsection{\it Integration by parts}

% \section{$L^q$-Sobolev estimates and restricted maximal operators}  

 \section{The decoupling step} \label{decouplingstepsect} 
 Let $\delta_0>2^{-l}$ and let $\delta_1<\delta_0$ be such that
%\Be
%\label{delta0-1} \begin{aligned} &\delta_1\ge (2^{-l}\delta_0)^{1/2}, &&\text{ if }\delta_0\le 2^{-l/3},\\&\delta_1\ge \delta_0^2, &&\text{ if } 2^{-l/3}<\delta_0<1.\end{aligned}\Ee

\Be \label{delta0-1} \delta_1\ge (2^{-l}\delta_0)^{1/2} \Ee

Fix $a\in \bbR^{2n}$, $\ub\in \bbR^{2n-1}$.
%, and let $a=(\ua, g(a))$
Suppose  we are given  a family of disjoint cubes 
 $\{Q_\nu\}$ in $\bbR^{2n-1}$ of side length $\delta_1$ contained in the  reference cube 
 \Be\label{refcube}Q:=\{\uy\in \bbR^{2n-1}: |y_i-b_i|\le \delta_0, i=1,\dots 2n-1\}.\Ee
 Suppose in what follows that  for each $\nu$ the function $(y,v)\mapsto f_\nu(y,v) $ is supported in $Q_\nu\times \bbR\times \bbR$.
%We consider the operator $T$ given by 

We fix $\eps>0$  and let 
%\begin{subequations}
%\begin{align}
%\label{l-k}&l\le k/3,\\
\Be \label{delta1-ell}
\delta_1\ge 2^{-l(1-\eps)}.
\Ee
%\end{align}\end{subequations}
 Let $\vphi$ be as in \eqref{vphidef}

% where $\nabla_\ux g(0)=0$.
Let $\chi_{l,a, u^\circ} $ be a smooth function supported in a ball of sidelength $2^{-l}$ centered at $(a, u^\circ)$, satisfying $|\partial^\alpha \chi_{l,a,u^\circ} |\le C_\alpha 2^{l |\alpha|}$ for all multiindices $\alpha$. Let $\zeta$ be a smooth function supported in $(-2,2)$. Let 
$K=K_{k,l,a,u^\circ}$ be given by 
\begin{subequations}
\begin{multline}  
 %\begin{align}\notag
 K(x,u,y,v) \\
  =\chi_{l,a, u^\circ} (x,u)
 %\chi(u-u^\circ)
 %\chi_0(2^l(x,u-u^\circ)) 
 \iint \zeta(2^{l-k}\sigma)\chi_1(2^{-k} \sqrt{\sigma^2+\tau^2})
 e^{i  \vphi(\sigma,\tau,x,u,y,v)} d\sigma d\tau
  \label{Kkldef}
  \end{multline}
  which after a change of variable (replacing $2^{-k}(\sigma,\tau)$ by $(\sigma,\tau)$) we can write
  \Be
  K(x,u,y,v) = %\chi_{l,a, u^\circ} (x,u)\,
 2^{2k}
 \iint 
 %\zeta(2^{l}\sigma)\chi_1(\sqrt{\sigma^2+\tau^2})
 \gamma(\sigma,\tau, x,u,y,v)
 e^{i  2^k\vphi(\sigma,\tau,x,u,y,v)} 
 d\sigma d\tau\,
 \label{Kdefin}
\Ee
\end{subequations} 
with  \begin{subequations}
\begin{align} &\ga(\sigma,\tau, x,u,y,v)=0 \text{ if $|x-a|+|u-u^\circ|+|\sigma|\gc 2^{-l}$}
 \\ 
&|\partial^{M}_{\sigma,\tau, x,u,y,v}\gamma| \le C_M 2^{lM}.
\end{align}
\end{subequations}
Here  $\partial ^M_{\dots}$ stands for any differentiation of order $M$ in the variables indicated.

%\footnote{\color{red} It is better for the induction to assume $|\partial^{|\alpha|}_{\sigma,\tau, x,u,y,v}\gamma| \le  2^{l|\alpha|}$ where $|\alpha|\le M(\eps)$ and $M(\eps)$ is like$(10\eps^{-1})^{100n}$. Differentiation properties with respect to the variables $(y,v)$ are actually better.}

We let $T$ denote any such operator with kernel $K$ and  $\gamma$  as above.
%\color{red} {\it The following proposition has to be quantified  carefully.}\color{black}

\begin{proposition} 
\label{lpdecouplingprop}
Let $2\le q\le \frac{4n+2}{2n-1}$. Let $0<\eps\le 1$, $k\gg 1$, $l\le k/3$, $\delta_1\ge 2^{-l(1-\eps)}$. With the above specifications on $Q$ and $\{Q_\nu\}$ we have, for any $\eps_1\in (0,\eps)$, and $N\in \bbN$, 
\begin{multline}\label{lpdecoupling}
 \Big\| T[\sum_\nu f_\nu]\Big\|_q \le C( \eps_1)  (\delta_0/\delta_1)^{(2n-1)(\frac 12-\frac 1q)+\eps_1}\Big(\sum_\nu\|T  f_\nu\|_q^q\Big)^{1/q}\\ +
 \tilde C(\eps,N) 2^{-k N} \sup_\nu \|f_\nu\|_q.
 \end{multline}
\end{proposition}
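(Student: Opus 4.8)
The plan is to transfer the variable-coefficient decoupling problem to a constant-coefficient one by freezing the base point and performing a change of variables adapted to the cone $\varSigma_x$. First I would observe that since $\ga$ is supported where $|x-a|\lesssim 2^{-l}$ and $\delta_1\ge 2^{-l(1-\eps)}\gg 2^{-l}$, the operator $T$ behaves essentially as if $x$ were frozen at $a$: Taylor-expanding the phase $2^k\vphi(\sigma,\tau,x,u,y,v)$ around $x=a$, $u=u^\circ$, the contribution of the nonlinear remainder in the $(x,u)$ variables is of size $2^k\cdot 2^{-l}\cdot(\text{stuff})$, but this is too large to absorb directly, so instead one should absorb the $x$-dependence into the amplitude after extracting the \emph{linear} part of the phase in $x$ and conjugating by a modulation. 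The upshot is that, modulo rapidly decaying $O(2^{-kN})$ errors (coming from integration by parts in $(\sigma,\tau)$ where the amplitude is smooth on scale $2^l$ while the oscillation is on scale $2^k\gg 2^{3l}$ — this is where the appendix integration-by-parts lemma enters), $Tf_\nu$ is, after a linear change of variables in the output $(x,u)$-space depending only on $a$, a Fourier extension-type operator over a $\delta_1$-tube decomposition of a $2^{-l}$-neighborhood of the light cone in $\bbR^{2n}$.

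The second step is to invoke the Bourgain–Demeter decoupling theorem (or Wolff/Łaba–Wolff for large $n$) for the cone in $\bbR^{2n}$, which for $2\le q\le \frac{2(2n-1)+2}{2n-3}$ — here the relevant cone has $2n-1$ nonvanishing principal curvatures, and the critical exponent is $\frac{4n+2}{2n-1}$, matching the hypothesis — gives the $\ell^q$ decoupling constant $(\delta_0/\delta_1)^{(2n-1)(1/2-1/q)+\eps_1}$ for decoupling the $\delta_0$-cap into $\delta_1$-caps. I would apply this at the correct scales: the reference cube $Q$ has size $\delta_0$ and the $Q_\nu$ have size $\delta_1$, with the constraint \eqref{delta0-1} $\delta_1\ge(2^{-l}\delta_0)^{1/2}$ exactly guaranteeing that the plates $\Gamma^x(Q_\nu)$ lie in the regime where the cone is well-approximated by its osculating parabola at that scale (so the standard flat decoupling applies after affine rescaling). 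The nonlinearity of $\uy\mapsto\Gamma^x(\uy)=(\uy,x_{2n}-g(\ux-\uy))$ and of the map $\tau\mapsto\tau J\Gamma^x(\uy)$ producing the cone is controlled because $g$ has nondegenerate Hessian: on a cube of size $\delta_1$ the deviation of $\Gamma^x$ from its affine approximation is $O(\delta_1^2)\le O(2^{-l}\delta_0)$, which is below the plate thickness, so it can be absorbed into the amplitude.

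The main obstacle, and the step requiring the most care, is the passage from the \emph{variable} cones $\varSigma_x$ to a \emph{single} model cone uniformly in the frozen parameters, while simultaneously tracking the curvature nondegeneracy uniformly as $\la$ ranges over a compact set. Concretely, one must exhibit a change of variables $(x,u,\sigma,\tau)\mapsto$ (new coordinates) — essentially straightening $\Gamma^x$ and diagonalizing the second fundamental form of $\varSigma_x$ — with Jacobians bounded above and below independently of $a$, $\ub$, $u^\circ$, $\la$, and of $k,l$; here the non-orthogonality of $J$ must be handled, since it skews the cone but does not destroy its curvature. I expect this to be done by an explicit affine-plus-quadratic normal form for $\vphi$ in the $(\sigma,\tau)$ and $\uy$ variables, after which the error terms (higher-order Taylor remainders, the $2^{-l}$-scale $x$-localization, the discrepancy between $\chi_1(2^{-k}\sqrt{\sigma^2+\tau^2})$ and its value on the relevant annulus) are all shown to be negligible either by absorption into a smooth amplitude of the allowed type or by the $O(2^{-kN})$ non-stationary-phase bound. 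Assembling these, \eqref{lpdecoupling} follows with the stated constant $C(\eps_1)$ depending on $\eps_1$ through the Bourgain–Demeter $\eps_1$-loss and $\tilde C(\eps,N)$ absorbing all the integration-by-parts constants.
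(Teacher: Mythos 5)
Your overall architecture (reduce to a frozen base point, apply Bourgain--Demeter for a cone with $2n-1$ nonvanishing curvatures, treat the remaining terms as $O(2^{-kN})$ errors via integration by parts, then pass from $\ell^2$ to $\ell^q$) matches the paper, but there is a genuine gap at the central step: you claim that after extracting the linear part of the phase in $x$ and making a \emph{linear} change of variables in the output $(x,u)$ (plus modulations), $Tf_\nu$ has Fourier support in $\delta_1$-plates of a fixed cone, up to working in a $2^{-l}$-neighborhood. This is not enough. The frequency of $Tf_\nu$ is essentially $2^k(\tau Jy,\tau)$ with $\uy\in Q_\nu$, and its component along the plate normal contains the term $2^k\tau\inn{N(\uy)}{x}$, which on the support $|x-a|\lesssim 2^{-l}$ varies by $\approx 2^{k-l}$. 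The plates needed to decouple a $\delta_0$-cap into $\delta_1$-caps have thickness $2^k\delta_1^2$, and under the hypotheses only $\delta_1\ge(2^{-l}\delta_0)^{1/2}$, $\delta_1\ge 2^{-l(1-\eps)}$ is available, so $2^{-l}\gg\delta_1^2$ except at the very first stage of the iteration ($\delta_1\approx 2^{-l/2}$); a $2^{-l}$-thick neighborhood of the cone can only be decoupled down to angular scale $2^{-l/2}$, not to $\delta_1$. Moreover this offending term is \emph{linear} in $x$ with coefficient $\tau$, so no modulation and no linear change of variables (which shift $\nabla_x\Phi$ by quantities constant in $x$) can remove it, and it cannot be absorbed into the amplitude since it oscillates at rate $2^{k-l}\gg 2^l$. (Your step 2 also has the inequality backwards: \eqref{delta0-1} says $\delta_1^2\ge 2^{-l}\delta_0$, not $\delta_1^2\le 2^{-l}\delta_0$; and the cone lives in frequency space $\bbR^{2n+1}$, not $\bbR^{2n}$.)

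The paper's fix, which is the crucial idea you are missing, is a \emph{quadratic} shear in the output variables: one replaces $T$ by $\cT f(x,u)=Tf(x,u+\tfrac12\inn{Sx}{x})$ with $S$ symmetric and chosen so that $SJ^\#e_{2n}=-e_{2n}$ (condition \eqref{S-assumpt}). This adds $\tfrac\tau2\inn{Sx}{x}$ to the phase, hence $\tau Sx$ to $\nabla_x\Phi$, and by the choice of $S$ the normal component $\tau\inn{SJ^\#N_\nu}{x}$ cancels $\tau\inn{N_\nu}{x}$ up to $O(2^{-l}\delta_0)$, which by \eqref{delta0-1} is $O(\delta_1^2)$; this is precisely the content of Lemma \ref{derivPhi}(ii). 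Only after this conjugation does the Fourier transform of $\cT f_\nu$ lie (modulo $O(2^{-kN})$ errors, established via the directional integration-by-parts argument of the appendix using the lower bounds \eqref{lowerbdgradient} and \eqref{lowerbdgradient2}) in the plates $2^k\Pi_\nu(\delta_1)$ of \eqref{platedefinition}, so that Bourgain--Demeter applies at angular scale $\delta_1$; H\"older over the $\lesssim(\delta_0/\delta_1)^{2n-1}$ cubes then yields \eqref{lpdecoupling}. The general case is reduced to the normalized one ($a=0$, $\ub=0$, $\nabla g(0)=0$) by the affine changes of variables of \S\ref{changesofvar}, which is in the spirit of your third paragraph, but the decisive device is the quadratic shear, not an affine normalization.
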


 \subsection{\it A model case}\label{model}
 We first consider the model situation 
 $\delta_0$, $\delta_1$ as in \eqref{delta0-1},
$Q$ as in \eqref{refcube},  $\vphi$ as \eqref{vphidef},  
  such that
 \Be\label{nablagzero}a=0, \,\, \ub=0,\,\, \nabla g(0)=0.\Ee
 
% It will be crucial in our proof to  make a quadratic change of variable which modifies $\Phi$  to
 %\Be\label{Psidef}\Psi(\sigma,\tau,x,u,y,v)= \Phi(\sigma,\tau,x,u,y,v)-\tau x_{2n} x^\intercal \!J e_{2n}\,.\Ee
%Let  $\cK_{k,l}$ be as in \eqref{Kkldef} but with $\Phi$ replaced by $\Psi$.
 %Let $\cT=\cT_{k,l}$ be the operator with Schwartz kernel $\cK_{k,l}$. 
% We now have the $\delta_1$-cubes $Q_\nu$ contained in a $\delta_0$-cube centered at the origin in $\bbR^{2n-1}$. We need to prove
  %for $2\le q\le \frac{4n+2}{2n-1}$, and $\eps>0$, $N>0$ 
 %\begin{multline}\label{claim}
% \big\| \cT_{k,l}[\sum_\nu f_\nu]\big\|_q \\
 %\le C(\eps, \eps_1) (\delta_1/\delta_0)^{-\eps} \Big(\sum_\nu\|\cT_{k,l} f_\nu\|_q^2\Big)^{1/2} +
 %\end{multline}
 As pointed out above the   crucial tool is the decoupling estimate from \cite{bourgain-demeter}.
  % see \S\ref{cc-decoupling}.
 The relevant cones $\varSigma_0$ are given by
  %associated with the cone
 \Be\label{Gacone}(\uy,\tau) \mapsto \tau (J\Gamma(\uy),1), \Ee 
 with $ \Gamma\equiv \Gamma^0$ as in \eqref{Gammaxdefinition}, i.e. 
 %\Be \label{coneatorigin} \varSigma_0=  \{(\tau J\Gamma(\uy),\tau)\}, \text{ with } 
\Be \Gamma(\uy)= (\uy, -g(-\uy)).
\Ee
% $\Gamma_i(\uy)=y_i$, $i=1,\dots, 2n-1$ and $\Gamma_{2n} (\uy)= -g(-\uy)$. 
Let
 \Be \label{Nydef}
 %N(\uy)=\frac{e_{2n}-\sum_{i=1}^{2n-1} \frac{\partial g}{\partial x_i}  (-\uy)}{(1+|\nabla g(\uy)|^2)^{1/2}},$$ 
 N(\uy)=e_{2n}-\sum_{i=1}^{2n-1} {\partial_i g} (-\uy)e_i
 \Ee which is  normal  to  
 $\uy\mapsto \Gamma(\uy)$.
 %= (\uy, -g(-\uy))$.
 Let $\uy^\nu\in Q_\nu$ and let  $N_\nu=N(\uy^\nu)/|N(\uy^\nu)|$.
 %$$N_\nud=e_{2n}-\sum_{i=1}^{2n-1} \frac{\partial g}{\partial x_i}  (-\uy^\nu),$$ the normal vector to 
 %$\uy\mapsto \Gamma(\uy)= (\uy, -g(-\uy))$ at $\uy^\nu$.
 % Let $N_{ J}(\uy) = \frac{J^\#N(\uy)}{|J^\# N(\uy)|}$ where 
  Let  $J^\#$ be the contragredient matrix, i.e.  $J^\#=(J^{-1})^\intercal$.  Since $J$ is skew symmetric we have $J^\#= - J^{-1}$.  $J^\# N(\uy)$ is normal to
  %  Note that $N_J (\uy)$ is normal to 
  $J\Gamma$ and 
  $$\inn {J\frac{\partial^2 \Ga(y)}{\partial y_j \partial y_k} }{\frac{J^\#N(\uy)}{|J^\# N(\uy)|} }= \frac {|N(\uy)|}{|J^\# N(\uy)|}
  \inn {\frac{\partial^2 \Ga(y)}{\partial y_j \partial y_k} }{\frac{ N(\uy)}{|N(\uy)|}}.
  $$
  This relates the curvature  form for $\Gamma$ to the curvature form for $J\Gamma$, and  the decoupling estimates from \cite{bourgain-demeter}, 
   in the version for general curved cones (\cite{pr-se}), 
  are applicable.
 %Define $\Pi_\nu(\delta_1)$ as the plate of all $(\oxi,\xi_{2n+1})$ such that
 %\begin{align*}&  \big|\inn{\oxi}{\Gamma(\uy)}+\xi_{2n+1}\big|\le C 2^k \\
% &\Big|\xi_i+\xi_{2n} \frac{\partial g}{\partial x_i} (-\uy^\nu)\Big|\le C2^k\delta_1,\quad i=1,\dots, 2n-1.
% \\ &\Big|\inn{\oxi}{N_\nu}- \xi_{2n+1} \inn{\Gamma(\uy^\nu)}{N_\nu}\Big| \le C 2^k \delta_1^2 \end{align*}
 %We consider the image of the cone under $\begin{pmatrix} J&0\\0&1\end{pmatrix}$.
 
 It turns out that, in order to perform the decoupling step via the Bourgain-Demeter inequality we will have to make a  change of variable in the $ (x,u)$ variables, using  a quadratic shear transformation. Thus we  consider instead the operator
 $\cT$ defined by
 $$\cT f(x,u)= Tf (x, u+ \tfrac 12 \inn{Sx}{x})$$ 
 where $S$ is a suitable symmetric linear transformation. Obviously, by changing variables,  
 \eqref{lpdecoupling} holds with $T$ if and only if it holds with $\cT$.
 
 It will be important in the proof  to choose $S$ such that  the following crucial assumption
  %\begin{assumption} 
  \Be \label{S-assumpt} S J^\# e_{2n} = - e_{2n}
  \Ee is satisfied. 
  To see
  % \footnote{This paragraph may be superfluous and may eventually be deleted}
  that $S$ can be chosen in smooth dependence on $J$ we notice that
$  u_1:=e_{2n}$ and $u_2= J^\#e_{2n}/|J^\#e_{2n}|$ form an orthonormal basis on 
 $\bbV=\text{span}(J^\# e_{2n}, e_{2n})$ which can be extended to an orthonormal basis  
 $\{u_1,\dots, u_{2n}\}$ of $\bbR^{2n}$. Let $c= |J^\# e_{2n}|$ 
 and we  let
  $Su_2=-c^{-1} u_1$, $Su_1= -c^{-1} u_2$ and $Su_i=u_i$ for $i=3,\dots, 2n$. Then $S$ is symmetric, invertible and $$\min\{1,  |J^\# e_{2n}|^{-1}\}\le \|S\|\le \max\{1,  |J^\# e_{2n}|^{-1}\}$$
 where $\|S\|$ denotes the $\ell^2\to \ell^2$ operator norm on $2n\times  2n$ matrices.  
 %$S$ as the identity on the orthogonal complement
% Note  that it is easy to construct such a symmetric matrix. As $J^# e_{2n}$ and $e_{2n}$ are perpendicular we define $S J^# e_{2n} = - e_{2n}$ and $Se_{2n}
 
 The Schwartz kernel of $\cT$ is given by 
 \Be
   2^{2k}
 \iint 
 %\zeta(2^{l}\sigma)\chi_1(\sqrt{\sigma^2+\tau^2})
 \gamma_1(\sigma,\tau, x,u,y,v)
 e^{i  2^k\Phi(\sigma,\tau,x,u,y,v)} 
 d\sigma d\tau\,
 \label{Kmoddefin}
\Ee
with  
\begin{align*} & \ga_1(\sigma,\tau, x,u,y,v)
=\gamma(\sigma,\tau, x,u+ \tfrac 12\inn{Sx}{x} ,y,v)
 \\ 
&\Phi(\sigma,\tau,x,u,y,v) =  
\vphi(\sigma,\tau,x,u,y,v) + \frac{\tau}{2}\inn{Sx}{x}
\end{align*}

 We now define nonisotropic cylinders (or  ``plates") associated to the cone \eqref{Gacone}.
 We use the notation 
 $$\xi=(\overline\xi,\xi_{2n+1})=(\underline\xi, \xi_{2n}, \xi_{2n+1}).$$
  The tangent space to the cone at $\tau(J\Gamma(\uy^\nu),1)$ is spanned
 by $$\{ J\Ga(\uy^\nu),1\}\cup\{(J \partial_i \Gamma (\uy^\nu),0): i=1,\dots, 2n-1\},$$
 and a normal vector is given by
 \Be \label{Jnormal} \begin{pmatrix}
 J^\#\!N_\nu  \\ - \inn {J\Gamma(\uy^\nu)}{J^\# N_\nu}
 \end{pmatrix}
 %e_{2n+1}  
 =\begin{pmatrix}  J^\#N_\nu\\- \inn{\Ga(\uy^\nu)}{N_\nu}\end{pmatrix}.
 %e_{2n+1}.
 \Ee
  The relevant plates  are $2^k \Pi_{\nu}(\delta_1)$
  where  the normalized plates  $ \Pi_{\nu}(\delta_1)$ are  defined by the inequalities
  \begin{subequations} \label{platedefinition}
  \begin{align}
%&  \big|\inn{\oxi}{J\Gamma(\uy)}+\xi_{2n+1}\big|\le C 2^k
&C^{-1} \le \sqrt{|\oxi|^2+\xi_{2n+1}^2}\le C  
 \\
 %&\Big|\inn{\oxi- \xi_{2n+1}J\Ga(\uy^\nu)}{\frac{\partial J\Gamma}{\partial y_i} (\uy^\nu)}\Big|\le C2^k\delta_1,\quad i=1,\dots, 2n-1.
 &|\oxi-\xi_{2n+1} J\Ga(\uy^\nu)| \le C \delta_1\label{plateb}
 \\
 &\big|\inn{\oxi -\xi_{2n+1} J\Ga(\uy^\nu)}{J^\#\!N_\nu}\big| \le C \delta_1^2.\label{platec}
 \end{align}
 \end{subequations}
 
The Bourgain-Demeter decoupling theorem 
%(with an  extension to general cones with curvature as in \cite{pr-se}) 
%\footnote{\color{red} Expand?\color{black}}
gives that 
%\footnote{This should be properly formulated in \S\ref{cc-decoupling}}
$$\Big\|\sum_\nu  F_\nu\Big\|_q \le C(\eps)  (\delta_1/\delta_0)^{-\eps}  \Big(\sum_\nu \|F_\nu\|_q^2\Big)^{1/2}, 
\quad 2\le q\le \frac{4n+2}{2n-1}\,,
$$
provided that the Fourier transforms $\widehat F_\nu$ are supported in $2^k\Pi_{\nu}(\delta_1)$. 
%Moreover the constant $\delta_1^{-\eps}$ can be replaced with $(\delta_1/\delta_0)^{-\eps}$ if $\delta_1<\delta_0<1$ and the $\uy^\nu$ are taken from a ball of diameter $\delta_0$. 
We have some freedom in the choice of the constant $C$ ranging over a compact subset of $(0,\infty)$. 
%\footnote{\color{red} Be more specific?\color{black}}
%If we change the constant $C$ the bounds depend in a continuous way on $C$
%(or a fixed multiple of these plates).
Let $\eta_\nu$ be a bump function which is equal to $1$ on $\Pi_{\nu}(\delta_1)$ and is supported 
on its double, and $\eta_\nu$ satisfies the natural differential inequalities. Specifically consider the radial tangential, nonradial tangential, and normal differentiation operators:
%if we define the first order differential operators
\begin{align*}
\cV_{\nu,0}&=\inn{J\Gamma(\overline y^\nu)}{\nabla_{\overline \xi}}+ \frac{\partial}{\partial\xi_{2n+1}},
\\
\cV_{\nu,i} &= \frac{\partial}{\partial \xi_i} - \inn{ J\Gamma(\overline y^\nu)}{e_i} \frac{\partial}{\partial \xi_{2n+1}},\quad i=1,\dots, 2n-1,
\\
\cV_{\nu}&= \inn{J^\# N_\nu}{\nabla_{\overline \xi} }- 
\inn{\Gamma(\overline y^\nu)}{N_\nu}\frac{\partial }{\partial\xi_{2n+1}}.
\end{align*}
Then
\[
\big|
\cV_{\nu,0}^{\alpha_0} \cV_{\nu,1}^{\alpha_1}\dots \cV_{\nu,2n-1}^{\alpha_{2n-1}} \cV_{\nu}^{\alpha_{2n}}\eta_\nu(\xi)\big|\,\le\,
C_{\alpha_0,\alpha_1,\dots, \alpha_{2n}} (1/\delta_1)^{2\alpha_{2n}+\sum_{i=1}^{2n-1}\alpha_i}.
%\delta_1^{-\alpha_1-\dots -\alpha_{2n-1}-2\alpha_{2n}}\,.
\]
%\footnote{\color{red} the usual, but specify \color{black} }
Define the Euclidean convolution operator $P_{k,\nu}$ in the multiplier formulation by  $$\widehat {P_{k,\nu} f}(\xi)=\eta_\nu(2^{-k}\xi)\widehat f(\xi).$$
Then by the decoupling inequality
\begin{multline}\label{decoupling} \Big\|\sum_\nu  P_{k,\nu} \cT f_\nu\Big\|_q \le C(\eps)  (\delta_1/\delta_0)^{-\eps}  \Big(\sum_\nu \|\cT f_\nu\|_q^2\Big)^{1/2}, \\
\quad 2\le q\le \frac{4n+2}{2n-1}\,.
\end{multline}
We need to  analyze the Schwartz kernel of $ f\mapsto (I-P_{k,\nu})\cT$ when acting on $f_\nu$.
%\begin{multline*} (I-P_\nu)T_\nu f_\nu(x)= \int \iint e^{i\inn{x-\tx}{\oxi} - i(u-\tu)\xi_{2n+1}} (1-\eta_\nu(2^{-k}\xi)) \chi_0(2^l(\tx, \tu-u^\circ))\\ \times  \iint e^{i\Psi(\sigma, \tau, \tx, \tu,y,v)}  \chi_0(2^{l-k}\sigma) \chi_1(2^{-k}\sqrt{\sigma^2+\tau^2}) d\sigma d\tau\, d\tx d\tu\, d\xi\end{multline*}
Thus we consider for $y\in Q_\nu$, 
\begin{multline*} 2^{(2n+3)k}
 \int \iint e^{i2^k\inn{x-\tx}{\oxi} + i2^k(u-\tu)\xi_{2n+1}} (1-\eta_\nu(2^{-k}\xi))
% \chi_{l,0} (\tx,\tu)
\\ \times  \iint e^{i2^k\Phi(\sigma, \tau, \tx, \tu,y,v)} 
% \chi_0(2^{l}\sigma) \chi_1(\sqrt{\sigma^2+\tau^2}) 
\ga_1(\sigma,\tau, \tx, \tu, y,v) d\sigma d\tau\, d\tx d\tu\, d\xi\,.
\end{multline*}

We can replace $(I-P_{k,\nu})\cT  f_\nu$  with  
$(I-P_{k,\nu} ) L_k \cT  f_\nu$  with  $L_k$ a Littlewood-Paley cutoff operator localizing to frequencies 
$C_1^{-1}  2^k\le |\xi| \le C_1 2^k$ (as the remaining error terms are handled by standard integration by part arguments, see e.g.\cite[\S4.2]{hoer}).
%\footnote{\color{red} say more? \color{black}}.
Also if $|x|+ |u| >C$ we see that by an additional integration by parts arguments in the 
$(\xi, \sigma,\tau)$ variables we get a bound 
$\lc C 2^{-(k-l)N}  (|x|+|u|)^{-N}$.
Hence we need to show that the $L^\infty$ norm of the integral is $O(2^{-kN})$. Since the $(\tx, \tu)$ integral is over a compact set it suffices to show that the Fourier transform of $\cT f_\nu$ in the complement of the double plate has norm $O(2^{-kN})$. Moreover, if
$|\tau-\xi_{2n+1}|\ge 2^{l-k}$ then we can integrate by parts with respect to $\tu$ and show that the resulting integral is $O(2^{-kN})$.
%\footnote{\color{red} All of this is standard, but  we need to carry out some details,to keep track on the dependence on $g$ and the symbols etc.\color{black}}

It remains to analyze, for $|\tau|\approx |\xi_{2n+1}|$  and $y\in Q_\nu$, the behavior of 
\Be \label{oscint}
 \iiint
   e^{i2^k(\Phi(\sigma, \tau, x, u,y,v) -\inn{x}{\oxi}-u\xi_{2n+1}) }
   %\chi_{l,0}(x,u)\chi_0(2^{l}\sigma) \chi_1(\sqrt{\sigma^2+\tau^2}) 
 \gamma_1(\sigma,\tau,x,u,y,v)   d\sigma  \,dx \, du \Ee
assuming that $\xi\notin \Pi_\nu(\delta)$ with $C$  in \eqref{platedefinition}  large.
% in the definiition of $\Pi_\nu(\delta)$
For   better readability we have changed  the notation from 
$(\tx,\tu)$ to $(x,u)$ in \eqref{oscint}.
In what follows we will set 
\Be\label{Psidef} \Psi(\sigma, \tau, x, u,y,v,\xi)= \Phi(\sigma, \tau, x, u,y,v) 
-\inn{x}{\oxi}-u\xi_{2n+1}\,.
\Ee

In order to estimate Fourier transforms in the complement of plates we need bounds for certain directional derivatives of the phase function $\Phi$ (which will then turned into lower bounds for the directional derivatives of $\Psi$ when $\xi$ is away from the plate).

\begin{lemma} \label{derivPhi} 
There is a constant $A\ge 1$ so that the following statements hold for all $y\in Q_\nu$, 
$|\sigma|\approx 2^{-l}$, $|\tau|\approx 1$.

(i) Let 
$\vec V_{\nu,i}= e_i-\inn{J\Gamma(\uy^\nu)}{e_i} e_{2n+1}.$ 
Then
\Be \label{upperbd-dirderiv1}
\big|\inn {\vec V_{\nu,i}}{\nabla_{x,u} \Phi} 
\big|
\le A
\big( \delta_1+ |x_{2n}-y_{2n}-g(\ux-\uy)|\big). \, \, 
\Ee

(ii) Let $\vec V_{\nu}$ be the normal vector in \eqref{Jnormal}. Then
\Be \label{upperbd-dirderiv2}
\big|\inn {\vec V_{\nu}}{\nabla_{x,u} \Phi} 
\big|
\le A \big(\delta_1^2+ |x_{2n}-y_{2n}-g(\ux-\uy)|\big). \, \, 
\Ee
\end{lemma}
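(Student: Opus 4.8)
The plan is to compute $\nabla_{x,u}\Phi$ explicitly from the definition $\Phi=\vphi+\frac{\tau}{2}\inn{Sx}{x}$ with $\vphi$ as in \eqref{vphidef}, and then to contract with the two test vectors $\vec V_{\nu,i}$ and $\vec V_\nu$, using the restriction $|\sigma|\approx 2^{-l}$ and the smallness $|x-a|+|u-u^\circ|\lesssim 2^{-l}$ built into the support of $\gamma_1$ (here $a=0$ in the model case). From \eqref{vphidef},
\[
\partial_u\vphi=\tau,\qquad
\partial_{x_{2n}}\vphi=\sigma,\qquad
\partial_{\ux}\vphi=-\sigma\,\nabla g(\ux-\uy)+\tau\,(x^\intercal J)_{\underline{\,\cdot\,}},
\]
and adding the shear term contributes $\tau\, Sx$ to $\nabla_x\Phi$ and nothing to $\partial_u\Phi$. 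So
$\nabla_{x,u}\Phi=\big(-\sigma\nabla g(\ux-\uy)+\tau(Jy)_{\text{rows }1..2n-1}+\tau Sx|_{\text{rows }1..2n-1},\ \sigma+\tau(Sx)_{2n}+\tau(Jy)_{2n}\ \big|\ \partial_u:\tau\big)$; I will write this cleanly once and reuse it for both parts.

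For part (i): the vector $\vec V_{\nu,i}=e_i-\inn{J\Gamma(\uy^\nu)}{e_i}e_{2n+1}$ is (by construction) tangent to the cone $\varSigma_0$ at the cone point over $\uy^\nu$, so $\inn{\vec V_{\nu,i}}{(\tau J\Gamma(\uy^\nu),\tau)}=0$; the point is that the leading term of $\nabla_{x,u}\Phi$ — namely the part proportional to $\tau$ — is, up to the errors we are allowed, exactly $\tau$ times the cone normal direction's \emph{complement}, i.e. exactly the cone point vector. Concretely, $\inn{\vec V_{\nu,i}}{\nabla_{x,u}\Phi}=\inn{e_i}{\nabla_x\Phi}-\inn{J\Gamma(\uy^\nu)}{e_i}\,\partial_u\Phi$. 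The $\sigma$-term is $O(|\sigma|)=O(2^{-l})\le O(\delta_1)$ by \eqref{delta1-ell}. The remaining terms are $\tau$ times $\big[(Sx)_i+(Jy)_i - \inn{J\Gamma(\uy^\nu)}{e_i}\big]$; I must show this bracket is $O(\delta_1+|x_{2n}-y_{2n}-g(\ux-\uy)|)$. Since $|x|\lesssim 2^{-l}\le\delta_1$ the $Sx$ contribution is harmless, and $Jy-J\Gamma(\uy^\nu) = J(y-\Gamma(\uy^\nu))$ where $y-\Gamma(\uy^\nu)=(\uy-\uy^\nu,\ y_{2n}+g(-\uy^\nu))$ has $\uy-\uy^\nu=O(\delta_1)$ (both in $Q_\nu$) and last coordinate $=\big(y_{2n}-x_{2n}+g(\ux-\uy)\big)+\big(x_{2n}-g(\ux-\uy)+g(-\uy^\nu)\big)$; the first bracket is the allowed error term, and for the second I use $x=O(2^{-l})$ and that $g(-\uy)-g(-\uy^\nu)=O(\delta_1)$ on $Q_\nu$ (Lipschitz bound for $g$) together with $\nabla g(0)=0$ to absorb $g(\ux-\uy)-g(-\uy)=O(|\ux|)=O(2^{-l})$. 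Collecting, the bracket is $O(\delta_1+|x_{2n}-y_{2n}-g(\ux-\uy)|)$, which gives \eqref{upperbd-dirderiv1}.

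For part (ii): now $\vec V_\nu$ is the genuine cone normal \eqref{Jnormal}, so $\inn{\vec V_\nu}{(\tau J\Gamma(\uy^\nu),\tau)}=0$ \emph{identically}, and moreover, because the cone is maximally curved, the contraction of $\vec V_\nu$ with the tangent derivative $J\partial_i\Gamma$ vanishes, so that the first-order Taylor expansion in $(\uy-\uy^\nu)$ of $\inn{\vec V_\nu}{(\tau J\Gamma(\uy),\tau)}$ vanishes and only the \emph{quadratic} remainder survives — this is the source of the improved $\delta_1^2$ on the right-hand side. The computation parallels (i): the $\sigma$-term is again $O(2^{-l})$, which I now need bounded by $\delta_1^2$; this is exactly what \eqref{delta0-1}/\eqref{delta1-ell} guarantee since $\delta_1\ge(2^{-l}\delta_0)^{1/2}\ge 2^{-l/2}\cdot(\text{const})$, hence $2^{-l}\lesssim\delta_1^2$. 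The $\tau$-part is $\tau\big[\inn{J^\#N_\nu}{Sx+Jy}-\inn{\Gamma(\uy^\nu)}{N_\nu}\big]$; writing $Jy=J\Gamma(\uy^\nu)+J(y-\Gamma(\uy^\nu))$ and using $\inn{J^\#N_\nu}{J\Gamma(\uy^\nu)}=\inn{\Gamma(\uy^\nu)}{N_\nu}$ (contragredient identity), the leading terms cancel and we are left with $\tau\big[\inn{J^\#N_\nu}{Sx}+\inn{J^\#N_\nu}{J(y-\Gamma(\uy^\nu))}\big]$. Here the $Sx$ term is $O(2^{-l})=O(\delta_1^2)$. For the other term, decompose $y-\Gamma(\uy^\nu)$ as above: its last coordinate contributes $O\big(|x_{2n}-y_{2n}-g(\ux-\uy)|\big)+O(2^{-l})$ as in (i), while the $\uy-\uy^\nu$ part pairs against $\inn{J^\#N_\nu}{J e_i}=\inn{N_\nu}{e_i}=-\partial_i g(-\uy^\nu)$, which is $O(\delta_1)$ — not $O(\delta_1^2)$. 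To recover the extra power I must instead pair against $N(\uy)$ rather than $N_\nu=N(\uy^\nu)$: write $\inn{J^\#N_\nu}{J(y-\Gamma(\uy^\nu))}=\inn{J^\#(N_\nu-N(\uy))}{J(\uy-\uy^\nu,\dots)}+\inn{J^\#N(\uy)}{J(y-\Gamma(\uy^\nu))}$; in the second summand the pairing of $N(\uy)$ with the tangential part $\uy-\uy^\nu$ vanishes \emph{up to} its own quadratic error $O(\delta_1^2)$ because $\Gamma$ is a graph over $\uy$ with $N(\uy)$ its exact normal, and in the first summand $N_\nu-N(\uy)=O(\delta_1)$ times $\uy-\uy^\nu=O(\delta_1)$ gives $O(\delta_1^2)$. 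This is the one genuinely delicate point; everything else is bookkeeping.

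\emph{Main obstacle.} The routine part is the explicit differentiation and the tangency cancellations. The real work — and the step I expect to be the crux — is part (ii)'s recovery of the \emph{quadratic} gain $\delta_1^2$ rather than the naive $\delta_1$: this requires expanding $\inn{\vec V_\nu}{(\tau J\Gamma(\uy),\tau)}$ to second order in $\uy-\uy^\nu$ and verifying that both the constant and linear Taylor terms cancel (the constant by the normality \eqref{Jnormal}, the linear by the contragredient identity together with $J^\#N(\uy)\perp J\partial_i\Gamma(\uy)$), so that only the $C^2$-bounded quadratic remainder is left; simultaneously one must check that $2^{-l}\lesssim\delta_1^2$ under hypothesis \eqref{delta0-1} so the $\sigma$-contribution does not spoil this gain. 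The constant $A$ at the end just collects the $C^1$ and $C^2$ bounds on $g$, the Lipschitz constant of $\Gamma$, $\|S\|$, and the ranges of $|\sigma|,|\tau|$, all uniform for $\la$ in a compact set.
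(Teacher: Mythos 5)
Your part (i) follows the paper's computation and is fine. Part (ii), however, contains a genuine gap: the argument leans on the claim $2^{-l}\lc\delta_1^2$, and that inequality is false in the relevant regime. From \eqref{delta0-1} you only get $\delta_1^2\ge 2^{-l}\delta_0$, and $\delta_0$ is not bounded below by a constant (in the iteration of \S\ref{iterationsect} it shrinks toward $2^{-l(1-\eps)}$); likewise \eqref{delta1-ell} only gives $\delta_1^2\ge 2^{-2l(1-\eps)}$, which for small $\eps$ is far smaller than $2^{-l}$. Your step $(2^{-l}\delta_0)^{1/2}\ge c\,2^{-l/2}$ tacitly assumes $\delta_0\gc 1$. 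Consequently every term you dismiss as ``$O(2^{-l})=O(\delta_1^2)$'' is in general too large, and these are exactly the places where the paper's structural cancellations have to enter.

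Concretely, three repairs are needed. First, for the $\sigma$-part you must not stop at $O(2^{-l})$: writing it, up to an $O(2^{-2l})$ error coming from $\nabla g(\ux-\uy)-\nabla g(-\uy)$, as $\sigma\inn{J^\#N_\nu}{N(\uy)}$ and using the skew symmetry of $J^\#$, i.e. $\inn{J^\#N_\nu}{N_\nu}=0$, gives $O(2^{-l}\delta_1)\le O(\delta_1^2)$. Second and third, the term $\tau\inn{J^\#N_\nu}{Sx}$ and the $x$-dependent piece of the last coordinate of $y-\Gamma(\uy^\nu)$, namely $x_{2n}-g(\ux-\uy)+g(-\uy)=\inn{x}{N(\uy)}+O(2^{-2l})$, are each genuinely of size $2^{-l}$ (for instance the first equals $-x_{2n}+O(\delta_0 2^{-l})$ by \eqref{S-assumpt}), so neither may be dropped on its own; they must be combined into $\inn{(I+SJ^\#)N_\nu}{x}$, and only the defining property \eqref{S-assumpt} of the shear, $(I+SJ^\#)e_{2n}=0$, together with $N_\nu=e_{2n}+O(\delta_0)$, reduces this to $O(2^{-l}\delta_0)$, which is $\le\delta_1^2$ by \eqref{delta0-1}. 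Your proof never invokes \eqref{S-assumpt}, yet this cancellation is the entire reason the operator $\cT$ with the quadratic shear was introduced; without it the bound \eqref{upperbd-dirderiv2} fails for the unsheared phase. The remaining ingredient of your part (ii) --- the quadratic Taylor gain from pairing the tangential increment with the normal --- does agree with the paper's estimate $\inn{N_\nu}{\Gamma(\uy)-\Gamma(\uy^\nu)}=O(\delta_1^2)$ and is sound.
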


\subsubsection{Proof of Lemma \ref{derivPhi}}
To see (i) we have for $i=1,\dots, 2n$,
\begin{align*}
\inn {\vec V_{\nu,i}}{\nabla_{x,u} \Phi} 
&=
\frac{\partial \Phi}{\partial x_i} - \inn{J\Gamma(\uy^\nu)}{e_i} 
\frac{\partial \Phi}{\partial u}
\\
&=\sigma\inn{N(\uy)}{e_i}+\tau (\inn{Jy}{e_i}- \inn{J\Gamma(\uy^\nu)}{e_i} + \inn{Sx}{e_i}).
%\le C_2\big(\delta_1+ |x_{2n}-y_{2n}-g(\ux-\uy)|\big). \, \, 
%\text{ if } y\in Q_\nu \,.
%\text{ and } |y_{2n}-x_{2n}-g(\ux-\uy)|\le 2^{-l}.
\end{align*}

We have $\sigma=O(2^{-l})$, $Sx=O(2^{-l})$ and 
\begin{align*}Jy-J\Gamma(\uy^\nu)&= 
%(y_{2n}-g(-\uy^\nu))\inn{e_i}{e_{2n}}+ O(\delta_1)\,.
(y_{2n}+g(-\uy)) Je_{2n} + (g(-\uy^\nu)-g(-\uy)) Je_{2n}
\\
&=(y_{2n}+g(-\uy)) Je_{2n} + O(\delta_1)
\end{align*}
Split
\begin{align*} 
|y_{2n}+g(-\uy^\nu)|&\le |y_{2n}-x_{2n}+g(\ux-\uy)| + |x_{2n}|
\\
&+ |g(-\uy)-g(\ux-\uy)|+|g(-\uy^\nu)-g(-\uy)|.
\end{align*}
%The first term on the right hand side is $O(2^{-l})$ by assumption 
%\eqref{upperbd-dirderiv1}. 
The second and the third terms are $O(2^{-l})$, by our localization in $x$. The fourth term is $O(\delta_0\delta_1)$ since $\uy\in Q_\nu$ and $\nabla g =O(\delta_0)$  in $Q_\nu$. Consequently
\Be\label{auxest}
|y_{2n}+g(-\uy^\nu)|
\lc (2^{-l} +\delta_0\delta_1)+ |x_{2n}-y_{2n} -g(\ux-\uy)|  
\Ee
and thus \eqref{upperbd-dirderiv1}  follows easily.

Next we verify  (ii).
We have
\begin{align}\label{JNder}
\inn {\vec V_{\nu}}{\nabla_{x,u} \Phi}& =
\inn{J^\# N_\nu}{\nabla_x\Phi} 
- \inn{J^\#N_\nu}{J\Gamma(\uy^\nu)} 
\frac{\partial\Phi}{\partial u}
\\\notag &=I (x,y,\sigma)+II(y,\sigma)+III(x,y,\tau)+IV(x,\tau)
\end{align}
where
\begin{align*}
I(x,y,\sigma)&= - \sigma \sum_{i=1}^{2n-1} \inn {J^\# N_\nu}{\partial_i g(\ux-\uy)-\partial_i g(-\uy)},
\\
II(y, \sigma) &= - \sigma \sum_{i=1}^{2n-1} \inn {J^\# N_\nu}{\partial_i g(-\uy)} + \sigma\inn {J^\# N_\nu}{e_{2n}},
\\
III(x, y,\tau)&= \tau\inn{J^\# N_\nu}{Jy} - \tau \inn{N_\nu}{\Gamma(\uy^\nu)},
\\IV(x,\tau)&=  \tau \inn {J^\# N_\nu}{Sx}.
\end{align*}
Since $\sigma=O(2^{-l})$  and $|x|=O(2^{-l})$ we have 
$| I(x,y,\sigma)| \lc 2^{-2l}$ which is of course $O(\delta_1^2)$.
By \eqref{Nydef} we have 
\begin{align*} II(y,\sigma)&= \sigma \inn{ J^\# N_\nu}{N(\uy)}
\\ 
 &=\sigma \inn{ J^\# N_\nu}{N(\uy)- N(\uy^\nu)} + \sigma |N(\uy^\nu)| \inn{J^\#N_\nu}{N_\nu}
 \end{align*} and by the skew symmetry of $J^\#$ the last summand drops out and we get 
%\label{JNder} &=I+II+II\sigma \inn{J^\# N_\nu}{N(\uy)} +\tau(\inn {J^\#N_\nu}{\nabla_x\Phi} -  \inn{J^\#N_\nu}{J\Gamma(\uy^\nu)} )
%%%%%%\\\lc C_1\delta_1^2 +C\delta_0|x_{2n}-y_{2n}-g(\ux-\uy)|\quad\text{ if } y\in Q_\nu .\end{align}
%Expand $N(\uy)$ about $\uy^\nu$ and use the skew symmetry of $J^\#$ to estimate
$\sigma \inn{J^\# N_\nu}{N(y)} =O(2^{-l}\delta_1)$ which is   $O(\delta_1^2)$.
For the third term, we write
\begin{align}
\notag
\tau^{-1} III(x,y,\tau)&= \inn {N_\nu}{y- \Gamma(\uy^\nu)} 
%+ \inn{J^\# N_\nu}{Sx}\big)
\\
%\label{threeterms}
\notag
&= \inn {N_\nu}{\Gamma(\uy)- \Gamma(\uy^\nu)} 
+ \inn{N_\nu}{y-\Ga(\uy)}.
%%(y_{2n}+g(-\uy)) \inn{N_\nu}{e_{2n}}
%+\inn{N_\nu}{J^{-1} Sx}\,.
\end{align}
%\begin{align}\notag&\tau(\inn {J^\#N_\nu}{\nabla_x\Phi} -  \inn{J^\#N_\nu}{J\Gamma(\uy^\nu)} )\\ &=\inn{J^\# N_\nu}{J\Gamma(y) -J\Gamma (y^\nu)} +(y_{2n}-g(-\uy)) \inn{J^\# N_\nu}{e_{2n}} \end{align}
By definition of $N_\nu$ and Taylor expansion,
\[ \inn {N_\nu}{\Gamma(\uy)- \Gamma(\uy^\nu)} = O(|\uy-\uy^\nu|^2)= O(\delta_1^2).\]
Next observe $N_\nu=e_{2n}+ O(\delta_0)$ and 
\begin{align*}x_{2n}-  g(x-\uy) +g(-\uy) &= x_{2n}- \inn{\ux}{\nabla g(-\uy)}+ O(2^{-2l})
\\&= \inn {x}{N(y)} \,+ O(2^{-2l})
\end{align*} and thus 
\begin{align*} 
&\inn{N_\nu}{y-\Ga(\uy)}=  \inn{N_\nu}{e_{2n}} (y_{2n}+ g(-\uy))
\\
=&- \inn{N_\nu}{e_{2n}} (x_{2n}- y_{2n}- g(\ux-\uy))  
+\inn{N_\nu}{e_{2n}} (x_{2n}-  g(x-\uy) +g(-\uy))
\\
=&
\inn{N_\nu}{e_{2n}} \inn{x}{N(y)} 
+ O\big (2^{-2l}+|x_{2n}- y_{2n}- g(\ux-\uy)|\big),  
\end{align*}
and furthermore 
\begin{align*}
\inn{N_\nu}{e_{2n}} \inn{x}{N(y)} &= \inn {N(y)} {x}(1+O(\delta_0)) \\&= 
\inn {N_\nu}{x} 
+  O( 2^{-l} \delta_0)).
\end{align*}
Adding $\tau^{-1} IV(x,\tau)$ we get
%We incorporate the third term in \eqref{threeterms} and get
\begin{align*}
&\inn {N_\nu}{x} +\tau^{-1} IV(x,\tau)= 
%\inn {N_\nu}{J^{-1} Sx} = 
\inn {(I+SJ^\#) N_\nu}{x}
\\ 
&=\inn {(I+SJ^\#) e_{2n}} {x} + O(2^{-l}\delta_0) \,=\, O( 2^{-l}\delta_0)
\end{align*}
where in the last equation we have used the crucial assumption  \eqref{S-assumpt} on the choice of $S$, i.e. that $e_{2n}$ is in the nullspace of $I+SJ^\#$.
Collecting these estimates we obtain 
\[|\tau|^{-1} | III(x,y,\tau)+ IV(x,\tau) | \lc  (\delta_1^2+ 2^{-l}\delta_0 + |x_{2n}- y_{2n}- g(\ux-\uy)|).
\]
By our assumption 
 \eqref{delta0-1} 
 we have $2^{-l}\delta_0\lc \delta_1^2$ and 
 the asserted estimate in (ii) follows.
 %. If $\delta_0<2^{-l/3}$ then we  assume $\delta_1\ge 2^{-l/2}\delta_0^{1/2}$  and thus  also $\delta_1\ge \delta_0^{3/2}\delta_0^{1/2} =\delta_0^2$. If $\delta_0\ge 2^{-l/3} $ we assume $\delta_1\ge \delta_0^2$ which also implies $\delta_1\ge 2^{-l/2}\delta_0^{1/2}$ in this range.
% Hence we get  in both cases
%$$\delta_0(\delta_0\delta_1 +2^{-l})\le \delta_1^2.$$
%It is the above estimate  that  necessitated the conditions on $\delta_1$  in \eqref{delta0-1}.
The proof is complete. \qed

\subsubsection{Estimation of Fourier transforms in the complement of the plates}
We apply Lemma \ref{intbyparts} in a two-dimensional setting 
where the $w_1$-derivative will be replaced with the directional derivative for a vector
$\vec V$ in $\bbR^{2m+1}$ and where $w_2=\sigma$.

We first assume that \eqref{plateb} does not hold, i.e. 
\Be\label{platenotb} 
|\xi_i-\inn{J\Gamma(\uy^\nu)}{e_i} \xi_{2n+1}
| \ge C_1\delta_1 
%\text{ for some $i\in \{1,\dots, 2n\}$.}
\Ee
for some $i\in \{1,\dots, 2n\}$ and $C_1>2A$, with $A\ge 1$ as in Lemma \ref{derivPhi}.

%Here $C_1$ will be chosen large, to be specified after  the upcoming discussion.
%We work with the direction $$\vec V_{\nu,i}= e_i-\inn{J\Gamma(\uy^\nu)}{e_i} e_{2n+1}.$$
% for an  index $i$ for which \eqref{platebotb} holds.
%We claim that for all $y\in Q_\nu$  \footnote{\color{red} For expository reasons it will be better to first formulate  \eqref{upperbd-dirderiv1} and \eqref{upperbd-dirderiv2} as a lemma and then choose $C_1\gg C_2$ in \eqref{platenotb}, \eqref{platenotc}.}

Note that $\frac{\partial \Phi}{\partial \sigma}= x_{2n}-y_{2n}- g(\ux-\uy)$. Hence if 
\eqref{platenotb} holds with $C_1 \ge 2A \ge 2$ then, 
%with $\nabla\equiv \nabla_{x,u}$, 
we get from part (i) of Lemma \ref{derivPhi}
\Be \label{lowerbdgradient}
\big | \inn{\vec V_{\nu,i}}{\nabla_{x,u}\Psi}\big|+ \big|\frac{\partial \Psi}{\partial \sigma}\big|\ge \delta_1/2.
\Ee Indeed the left hand side is equal to 
\begin{align} \notag 
&\Big|\frac{\partial \Phi}{\partial x_i} - \inn{J\Gamma(\uy^\nu)}{e_i} 
\frac{\partial \Phi}{\partial u} -\xi_i + 
\inn{J\Gamma(\uy^\nu)}{e_i} \xi_{2n+1}
\Big| + \Big|\frac{\partial \Phi}{\partial \sigma}\Big|
\\  \notag
& \ge \max \{ 0, (C_1-A) \delta_1 - A |x_{2n}-y_{2n} -g(\ux-\uy)|\}  +  |x_{2n}-y_{2n} -g(\ux-\uy)| 
\\ \notag&\ge \frac{ C_1-A}{2A} \delta_1 \ge \frac{\delta_1}{2} .
\end{align}
We now use integration by parts. 
%Let $\Psi$ be as in \eqref{Psidef}  and   $\nabla\equiv \nabla_{(x,u)}$.
%$$\Psi(\sigma, \tau, x,u,y,v, \xi)=\Phi(\sigma,\tau, x,u, y,v)-\inn{x}{\oxi}-u\xi_{2n+1}.$$
We assume \eqref{platenotb} and define
%, with  $\vec V_{\nu,i}= e_i- \inn{J\Gamma(\uy^\nu)}{e_i} e_{2n+1}$,   
 a differential operator $L$ by 
%\begin{align*}
\Be
Lh =\inn {\vec V_{\nu,i}}{\nabla} 
\Big( \frac{ \inn {\vec V_{\nu,i}}{\nabla \Psi} 
 \inn {\vec V_{\nu,i}}{\nabla h} }
{| \inn {\vec V_{\nu,i}}{\nabla \Psi} |^2 + |\tfrac{\partial \Psi}{\partial \sigma}|^2}\Big)
+
\frac{\partial}{\partial \sigma} 
\Big( \frac{ \tfrac{ \partial\Psi}{\partial\sigma} \tfrac{ \partial h}{\partial\sigma} }
{| \inn {\vec V_{\nu,i}}{\nabla \Psi} |^2 + |\tfrac{\partial \Psi}{\partial \sigma}|^2}\Big).
\Ee
%\end{align*}
%If $h(x,u,\sigma,\cdot) = 
%\chi_{l,0}(x,u)\chi_0(2^{l}\sigma) \chi_1(\sqrt{\sigma^2+\tau^2}) $
The integral \eqref{oscint} becomes
\Be \label{oscintnew}
i^N 2^{-kN}  \iiint
   e^{i2^k(\Psi(\sigma, \tau, x, u,y,v, \xi)) } L^N \ga_1(\sigma,\tau, x,u,y,v) d\sigma  dx du \Ee
and $|L^N \ga |\lc_N (2^l\delta_1^{-1})^N$ by a straightforward analysis using Lemma \ref{intbyparts}.  Since 
$2^{k-l}\delta_1 \gc 2^{k/3}$ we gain a factor $2^{-kN_1/3}$ with $N_1$ integrations by parts.

\bigskip

Next consider the  more subtle  case where  \eqref{platec} does not hold, i.e. we have
\Be \label{platenotc}
\big|\inn{\oxi -\xi_{2n+1} J\Ga(\uy^\nu)}{J^\#\!N_\nu}\big| \ge C_1 \delta_1^2 
\Ee provided that  $C_1\ge 2A$.
%Let  $V_\nu$ be  the normal vector defined in \eqref{Jnormal} and$\nabla\equiv\nabla_{(x,u)}$.
We now have by part (ii) of Lemma \ref{derivPhi}
\Be \label{lowerbdgradient2}
\big | \inn{\vec V_{\nu}}{\nabla\Psi}\big|+ \big|\frac{\partial \Psi}{\partial \sigma}\big| \ge \delta_1^2/2.
\Ee
To see this observe that the left hand side is equal to
\begin{align}
 \notag 
&\Big|\inn{J^\#N_\nu}{\nabla_x\Phi} - \inn{\Gamma(\uy^\nu)}{N_\nu} \frac{\partial \Phi}{\partial u}
 -\inn{J^\#N_\nu} {\overline \xi}+ 
\inn{\Gamma(\uy^\nu)}{N_\nu} \xi_{2n+1}
\Big| + \Big|\frac{\partial \Phi}{\partial \sigma}\Big|
\\  \notag
& \ge \max \{ 0, (C_1-A) \delta_1^2 - A |x_{2n}-y_{2n} -g(\ux-\uy)|\}  +  |x_{2n}-y_{2n} -g(\ux-\uy)| 
\\ \notag&\ge \frac{ C_1-A}{2A} \delta_1^2 \ge \frac{\delta_1^2}{2} .
\end{align}

We use for our integration by parts the operator  $\tilde L$  defined by
\Be
\tilde Lh=\inn {\vec V_{\nu}}{\nabla} 
\Big( \frac{ \inn {\vec V_{\nu}}{\nabla\Psi} 
 \inn {\vec V_{\nu}}{\nabla h} }
{| \inn {\vec V_{\nu}}{\nabla \Psi} |^2 + |\tfrac{\partial \Psi}{\partial \sigma}|^2}\Big)
+
\frac{\partial}{\partial \sigma} 
\Big( \frac{ \tfrac{ \partial\Psi}{\partial\sigma} \tfrac{ \partial h}{\partial\sigma} }
{| \inn {\vec V_{\nu}}{\nabla \Psi} |^2 + |\tfrac{\partial \Psi}{\partial \sigma}|^2}\Big).
\Ee

%\end{align*}
Again  we get the formula  \eqref{oscintnew} with $L$ replaced by $\tilde L$ and we need to examine the symbol
$(\tilde L)^N \ga$ using the crucial lower bound \eqref{lowerbdgradient2}.
%By the above estimate
%By the above discussion we can conclude\Be \label{lowerbdgrad}
%$\big(| \inn {\vec V_{\nu}}{\nabla \Psi} |^2 + |\tfrac{\partial \Psi}{\partial \sigma}|^2\big)^{1/2}\gc \delta_1^2$
We use the terminology in the appendix (\cf.  Definition \ref{typeABterms}). 
Analyzing the terms of type $(A,j)$ we thus get a bound $O((2^l\delta_1^{-2})^j)$.
For the terms $(B,1)$  (second derivative of $\Psi$ divided by the square of a gradient) we notice that pure $(\sigma,u)$  derivatives of second order are zero and pure $x$ derivatives of second order carry the factor $\sigma=O(2^{-l})$.
We need to get an upper bound for mixed  derivatives of second order and notice that
\Be \frac{\partial }{\partial\sigma} \inn{\vec V_\nu}{\nabla} \Psi= -\inn{J^\# N_\nu}{N(y)} = O(\delta_1)
\Ee
for $y\in Q_\nu$. This shows that the  type $(B,1)$ terms are $\lc  \delta_1^{-3} +2^{-l} \delta_1^{-4}$ and hence  $\lc 2^l \delta_1^{-2}$. 
The type $(B,j)$ terms for $j\ge 2$ are $O(\delta^{-2(j+1)})$.
Hence if $\beta$ is a product of 
a bounded term, a term of   type $(A, j_A)$,  $M_1$ terms of type $(B,1)$ and 
$M_2$ terms of type 
$(B, \kappa_i) $  with $\kappa_i\ge 2$,  and if $j_A+ M_1+ \sum_{i=1}^{M_2}\kappa_i =N_1$,
we get a bound 
$$|\beta|\lc (2^{l} \delta_1^{-2})^{j_A +M_1}  \prod_{i=1}^{M_2}  \delta_1^{-2(\kappa_i+1)}$$
and hence
\begin{align*}2^{-kN_1} |(\tilde L)^{N_1}  \ga|& \lc_{N_1} 
(2^{k-l}\delta_1^2)^{-j_A-M_1} \prod_{i=1}^{M_2}  \big(2^{-k} \delta_1^{-2\frac{\ka_i+1}{\ka_i}}\big) ^{\kappa_i}
\\
&\lc_{N_1}
 (2^{k-l}\delta_1^2)^{-N_1}\lc
2^{-kN_1} 2^{l(3-2\eps)N_1}\lc 2^{-2k N_1\eps/3}
\end{align*}
 since we are assuming $\delta_1>2^{-l(1-\eps)}$ and $l\le k/3$. We choose 
 $N_1$ large, say $N_1> (2N+10 n)/\eps$,  and 
% and $\delta_1>2^{-l}$.
from \eqref{decoupling} and the above error analysis we get the bound
\begin{multline}\label{l2decoupling} \Big\|\sum_\nu  P_{k,\nu} Tf_\nu\Big\|_q \\ \le C(\eps)  (\delta_1/\delta_0)^{-\eps}  \Big(\sum_\nu \|Tf_\nu\|_q^2\Big)^{1/2}
+C_3(\eps, N)  2^{-kN} \sup_\nu \|f_\nu\|_q.
%\quad 2\le q\le \frac{4n+2}{2n-1}\,.
\end{multline}
Now apply H\"older's inequality in the $\nu$ sum (which has  $\lc (\delta_1/\delta_0)^{-(2n-1)}$ terms) to also get  \eqref{lpdecoupling}. This  finishes the proof of Proposition \ref{lpdecouplingprop}
under the additional assumption \eqref{nablagzero}.

%\subsection {\it Quadratic change of variable} 
%We now let $g$ as in \S \ref{model}, i.e. with $g(0)=0$ and $\Phi$ as in \eqref{Phidef}. Let %$T\equiv T_{k,l}$ be the integral operator with Schwartz kernel 
 %$$K_{k,l} (x,u,y,v)  = \chi(2^lx) \iint e^{i \Phi(\sigma, \tau, x,u, y,v)} \chi_0(2^{l-k}\sigma)  \chi_1(2^{-k} \sqrt{\sigma^2+\tau^2}) d\sigma \, d\tau. $$
% Then  $$T_{k,l} f(x,u)= \cT_{k,l}f (x, u+x_{2n} x^\intercal Je_{2n})$$ and the inequality \eqref{claim} immediately gives the corresponding inequality with $\cT_{k,l}$ replaced by $T_{k,l}$, by applying the change of variable on the left hand side and then the inverse change of variable on the right han side of the inequality.
 
% By H\"older's inequality (and the fact that we have $O((\delta_0/\delta_1)^{2n-1}$ many cubes $Q_\nu$ in the reference cube, we get \begin{multline}\label{claimpp} \big\| T[\sum_\nu f_\nu]\big\|_p \le C(\eps, \eps_1)  (\delta_0/\delta_1)^{(2n-1)(\frac 12-\frac 1p)-\eps}\Big(\sum_\nu\|\cT f_\nu\|_p^p\Big)^{1/p}\\ + \tilde C(\eps_1,N) 2^{-k) N} \sup_\nu \|f_\nu\|_p. \end{multline}

\subsection{\it Changes of variables} \label{changesofvar}
We now complete the proof of Proposition 
\ref{lpdecouplingprop} by reducing the general case to the model case \eqref{nablagzero}.

 Let again $\delta_1, \delta_0$ be as in \eqref{delta0-1}.
 % $\delta_1^2\ge 2^{-l}\delta_0\,.$
We are now given a family of disjoint cubes 
%of sidelength $\delta_1$ contained in a cube of sidelength $\delta_0$, again with 
 $\{Q_\nu\}$ in $\bbR^{2n-1}$ of sidelength $\delta_1$ contained in a reference cube $$Q:=\{\uy\in \bbR^{2n-1}: |y_i-b_i|\le \delta_0, i=1,\dots 2n-1\}$$
 and suppose that the function $f_\nu$ is supported in $Q_\nu\times \bbR\times\bbR$.
We consider the operator $T$ with Schwartz kernel as in \eqref{Kdefin} but do not assume that $\nabla g$ vanishes at the reference point $(\ua,\ub)$. 
Decomposing the cutoff function in $(x,u)$ into a finite number of pieces (with the number depending on upper bounds for $g'$ we may assume that
\Be\supp(\chi_{l,a,u^\circ})\subset \{ |x-a|\le c_02^{-l}, \,|u-u^\circ|\le c_02^{-l}  \}
\label{smallc0}\Ee
for some small $c_0>0$.
We also set
$a=(\ua, a_{2n})$,  and  $b=(\ub,0)$.

%We introduce new variables. 
Define $G\equiv G_{a, \ub}$ by  $$G (\uw)= g(\ua-\ub+\uw)- a_{2n}- \uw^\intercal \nabla_xg(\ua-\ub)$$
so that
$$G(0)= -a_{2n}+g(\ua-\ub), \quad G'(0)=0\,.$$ 
We now  introduce the change of variables $(X,U)=(\uX,X_{2n}, U)$,
$(Y,V)=(\uY,Y_{2n},V)$, defined by 
\begin{align*} \uX&=\ux-\ua, 
\\X_{2n}&= x_{2n}-a_{2n}-(\ux-\ua)^\intercal  \nabla g(\ua-\ub)
\\ U&= u+\la(x)+(x-a)^\intercal Jb 
\end{align*} and
\begin{align*} \uY&=\uy-\ub, 
\\ Y_{2n}&= y_{2n}-(\uy-\ub)^\intercal  \nabla g(\ua-\ub)
\\ V&=v+\la(y)-a^\intercal Jy
\end{align*}
Then  
%setting $G(w)= g(\ua-\ub+w)-a_{2n}-w^\intercal g'(\ua-\ub)$ 
we have 
\begin{subequations}
\Be \label{cofv1} x_{2n}-y_{2n}-g(\ux-\uy)=X_{2n}-Y_{2n}-G(\uX-\uY).\Ee
Setting  $B=\begin{pmatrix} I_{2n-1}&0\\g'(\ua-\ub)&1\end{pmatrix}$ we also have 
\begin{align}\notag u-v+x^\intercal J y+\la(x-y) &= U-V+(x-a)^\intercal J (y-b)
\\
\label{cofv2}&=U-V+X^\intercal B^\intercal JBY.
\end{align}
Here $B^\intercal JB$ belongs to a compact family of invertible skew symmetric $2n\times 2n$ matrices.
\end{subequations}
Now, after a decomposition into a finite number of pieces with $(x,u)$-support of diameter $<2^{-l}$   we can   reduce to the situation  where we can apply the estimates in \S\ref{model}.
%Define $\imath_R$, $\imath_L$ by $\imath^R(X,U)=(x,u)$, $\imath^R(Y,V)=(y,v)$ and let $g_\nu=f_\nu\circ \imath_R$.Then $g_\nu$ is supported in $\tilde Q_\nu\times\bbR\times\bbR$ where $\tilde Q_\nu$ is a $\delta_1$-cube and a translate of $Q_\nu$.Let 

%This completes the proof of Proposition \ref{lpdecouplingprop}.

\section{Proof of Proposition \ref{dyadicq-Sob}}\label{iterationsect} 
 
%\subsection{\it Iteration and applications of $L^2$ estimates}
%\footnote{Details must be included, including proper definitions of various terms}
We  now iterate the estimates in Proposition \ref{lpdecouplingprop}.
%proving the estimates in Proposition \ref{dyadicq-Sob}.
%
We give the argument for $f\mapsto f*\mu_{k,l}$, $l\le[k/3]$ (see the definitions ahead of the statement of Proposition
\ref{muklestprop}).
We can use the Heisenberg translations to reduce to the case that $f$ is supported in $\{(y,v):
|y|\le 1,  |v|\le 1\}$. Then $f*\mu_{k,l}$ is supported in  $\{(x,u): |x|+|u|\le C\}$ for some fixed constant.

We shall work with a partition of unity in $(x,u)$ space 
$$\sum_{(x^\circ, u^\circ)\in \cZ_l}\chi\ci{x^\circ,u^\circ}=1$$
where 
$\cZ_l$ is a grid of $c2^{-l}$ separated points and the bump functions 
$\chi\ci{x^\circ,u^\circ}$ 
are associated in a natural way with cubes of diameter $O(2^{-l})$ centered at the points in the grid.

Then for $f$ with support in a fixed ball near the origin we get
$$\|f*\mu_{k,l}\|_q \lc \Big(\sum_{(x^\circ, u^\circ)\in \cZ_l}
\big\|\chi\ci{x^\circ,u^\circ}  (f*\mu_{k,l})\big\|_q^q\Big)^{1/q}
$$

We define numbers $\delta_j$ of the form $2^{-m_j}$ with $m_j\in \bbN$ as follows.
Let $m_0=[l\eps/100n ]$ and $\delta_0=2^{-m_0}$.
%Define $m_j=[2m_{j-1}]$ if $m_{j-1}<l/3$. A finite application of this iteration will yield an index $\underline j$ with $m_{\underline j}\ge l/3$. Define for $j\ge \underline  j$, 
 Define for $j\ge 1$
 $$m_j = \Big\lfloor \frac{l+m_{j-1}}2\Big\rfloor,$$ note that $m_j \to l$.
We will stop the process when $m_j > l(1-\frac{\eps}{10n})$. Let $j_*$ be the smallest integer greater than $l(1-\frac{\eps}{10n})$.

Decompose $\bbR^{2n-1}$  into disjoint dyadic cubes of side length $2^{-m_0}$, call these cubes
$Q^0_\nu$. Let $f_{0,\nu}(y,v)= f(y,v)\bbone_{Q^0_\nu}(y,v)$. Then by Minkowski's and H\"older's inequality  
\begin{align}\label{indstart}\|f*\mu_{k,l}\|_q &\lc \sum_{\nu_0} \Big(\sum_{(x^\circ, u^\circ)\in \cZ_{l}}
\big\|\chi\ci{x^\circ,u^\circ} ( f_{0,\nu_0}*\mu_{k,l})\big\|_q^q\Big)^{1/q}
\\
&\lc 2^{m_0(2n+1)/q'} \Big(\sum_{\nu_0} \sum_{(x^\circ, u^\circ)\in\cZ_l}
\big\|\chi\ci{x^\circ,u^\circ} ( f_{0,\nu_0}*\mu_{k,l})\big\|_q^q\Big)^{1/q}. \notag
\end{align}

Fix $j\le j_*$ and let $\{Q^j_{\nu}\}$ be the collection of dyadic cubes of sidelength $2^{-m_j}$.
%Now decompose each cube $Q^0_{\nu_0}$ as a disjoint union of dyadic cubes  of side length $2^{-m_1}$ and get the corresponding decomposition on the functions,
Set $f_{j,\nu}=f\bbone_{Q^j_\nu}$.
We claim, for some $C(\eps_1)$  and any $N_2\in \bbN$  the bound
\begin{align}\label{indclaim} 
\|f*\mu_{k,l}\|_q &\le C_0 C_1(\eps_1)^j 2^{m_0\frac{2n+1}{q'}}  2^{(m_j-m_0)((2n-1)(\frac 12-\frac 1q)+\eps_1)}
\\&\qquad \qquad\times \Big(
\sum_{\nu_j}  
\sum_{(x^\circ, u^\circ)\in \cZ_l}
\big\|\chi\ci{x^\circ,u^\circ} ( f_{j,\nu_j}*\mu_{k,l})\big\|_q^q\Big)^{1/q}\notag
\\&+ j C_2(\eps,N_1) C_1(\eps_1)^{j-1}  2^{(2n+1)l} 2^{-kN_1} \|f\|_q.\notag
\end{align}
For $j=0$ this holds by \eqref{indstart}. Suppose \eqref{indclaim} holds for $j=J<j_*$.
%$$f_{0,\nu_0}= \sum_{\nu_1:Q^1_{\nu_1}\subset Q^0_{\nu_0}} f_{1,\nu_1}.$$
We apply Proposition \ref{lpdecouplingprop} to bound
%\footnote{Should be better explained} 
\begin{align*}
&\|f*\mu_{k,l}\|_q \le  C(\eps_1)^{J+1} 2^{m_0(2n+1)/q'}  2^{(m_{J+1}-m_J+m_J-m_0)((2n-1)(\frac 12-\frac 1q)+\eps_1)}\\ 
&\qquad \qquad\times \Big(\sum_{\nu_J} 
\sum_{\nu_{J+1}: Q^{J+1}_{\nu_{J+1}}\subset Q^J_{\nu_J}}  
\sum_{(x^\circ, u^\circ)\in \cZ_l}
\big\|\chi\ci{x^\circ,u^\circ} ( f_{J+1,\nu_{J+1}}*\mu_{k,l})\big\|_q^q\Big)^{1/q}
\\&\qquad\qquad+ (J+1)  C(\eps,N_1)C(\eps_1)^{J}  2^{(2n+1)l} 2^{-kN_1} \|f\|_q .
\end{align*}
Choosing $N_1>N_2+ 2n+1$ gives
\begin{align*}
&\|f*\mu_{k,l}\|_q\,\le C(\eps_1)^{J+1} 2^{m_0\frac{2n+1}{q'}}  2^{(m_{J+1}-m_0)((2n-1)(\frac 12-\frac 1q)+\eps_1)}
\\&\qquad \qquad\times \Big(
\sum_{\nu}  
\sum_{(x^\circ, u^\circ)\in \cZ_l}
\big\|\chi\ci{x^\circ,u^\circ} ( f_{J+1,\nu}*\mu_{k,l})\big\|_q^q\Big)^{1/q}
\\&\qquad\qquad+ (J+1)C(\eps_1)^J \tilde C(\eps,N_2)  2^{-kN_2} \|f\|_q.
\end{align*}

We apply this for $J= j_*-1$ and observe that $j_* \lc \eps^{-1}+\log l$.
\begin{align*}
\|f*\mu_{k,l}\|_q &\lc C(\eps_1)^{C_2(\log l+\eps^{-1})} 2^{m_0\frac{2n+1}{q'}}  2^{(m_{j_*}-m_0)((2n-1)(\frac 12-\frac 1q)+\eps_1)}
\\&\quad\times \Big(
\sum_{\nu}  
\sum_{(x^\circ, u^\circ)\in \cZ_l}
\big\|\chi\ci{x^\circ,u^\circ} ( f_{j_*,\nu}*\mu_{k,l})\big\|_q^q\Big)^{1/q}
\\&\quad+ C(\eps_1)^{C_2(\log l +\eps^{-1})} C(\eps, N_2)  (\eps^{-1} +\log l) 2^{-kN_2}\|f\|_q
\end{align*}

%d in a dyadic cube of side length $2^{-m_{j_*}}\approx \delta_{j_*}\approx 2^{l(1-\eps)}$.

%We use an initial trivial decoupling argument at scale $\delta_0$ (simply by using Minkowski's argument, resulting in an additional constant $\delta_0^{-2n-1}$.

We use the definition of $m_0$ and $m_{j_*}$, and that
$A^{\log j} \le C(\delta,A) 2^{\delta j}$ for any $\delta>0$. Thus, for $2\le q\le \frac{4n+2}{2n-1}$,
\begin{multline} \label{decappl} 
\|f*\mu_{k,l} \|_q \lc_{\eps,\eps_1} 2^{l\eps/10} 
 2^{l(1-\eps) (2n-1) (\frac 12-\frac 1q+\eps_1)}
\Big( \sum_\nu \|f_{j_*,\nu}*\mu_{k,l} \|_q^q\Big)^{1/q}\\+ C_{N,\eps}2^{-kN}\|f\|_q
\end{multline}
%(or a slightly better estimate with irrelevant improvements).
We use the $L^2$ estimates for Fourier integral operators associated with folding canonical relations, as  in \cite{MuSe} (relying on  the version in \cite{GS}). 
We get for a bounded set $\cU$
\Be \label{L2appl} 
 \Big( \sum_\nu \|f_{j_*,\nu}*\mu_{k,l} \|_{L^2(\cU)}^2\Big)^{1/2}  \lc  2^{-k (2n-1)/2}2^{l/2} 
\|f\|_2
\Ee
for $l\le [k/3]$.
We also have a trivial $L^\infty$ bound, using that the projection of the support of $f_{j_*,\nu}$ to $\bbR^{2n-1}$ is contained in a ball of radius $c 2^{-m_{j_*}}\approx 2^{-l (1-\frac {\eps}{10 n})}$.  We get
\Be \label{L2appl} 
\sup_\nu \|f_{j_*,\nu}*\mu_{k,l} \|_\infty \lc  2^{-l(1-\frac{\eps}{10 n})(2n-1)} \|f\|_\infty.
\Ee
By interpolation 
\Be\label{interpol}
\Big( \sum_\nu \|f_\nu*\mu_{k,l} \|_{L^q(\cU)}^q\Big)^{1/q}  \lc_\eps 2^{-k(2n-1)/q} 2^{-l (2n-1) +l(4n-1)/q} 2^{l\eps/2}\|f\|_q
\Ee
We combine  \eqref{decappl}  and \eqref{interpol} and obtain 
\Be \label{decapplcomb} 
\|f*\mu_{k,l} \|_{L^q(\cU)} \lc_\eps  2^{l\eps} 2^{l(\frac{2n}q-\frac{2n-1}{2})} 2^{-k\frac{2n-1}{q}}\|f\|_q
\Ee
for $l\le [k/3]$ and  (choosing $\eps$ small) we can sum in $l$ if $q>\frac{4n}{2n-1}$.

Equivalently we  obtain
\Be\|f*\mu_k\|_q\le C(q)  2^{-k(2n-1)/q} \|f\|_q \text{ for } \frac{4n}{2n-1} <q\le \frac{4n+2}{2n-1},
\label{restrictedrange}
\Ee 
provided that $f$ is supported in a fixed ball centered at the origin, say in $Q=[0,1)^{2n+1}$.

We now  remove this assumption on the support of $f$ in \eqref{restrictedrange}.
For $m\in \bbZ^{2n+1}$ let $$Q_m=m\cdot Q= \{(\overline m+y, m_{2n+1}+v +\overline m^\intercal Jy: y\in Q\}.$$
Let $\mu_k$ be supported in $R_A=\{(x,u): |x|\le A, |u|\le A \}$ for some $A\ge 1$. Then 
$(f\bbone_Q) *\mu_k$ is supported on the set
$$\{(x,u): |x|\le A+\sqrt{2n}, \,|u|\le A+1+(A+\sqrt{2n}\|J\|\sqrt{2n}\};$$
% where $|x|\le A+\sqrt{2n}$, $|u|\le 1+A\sqrt{2n}\|J\|$
to see this  write $x^\intercal Jy=(x-y)^\intercal Jy$. Thus 
$(f\bbone_Q) *\mu_k$ is supported  in $R_B$ with   $B=B(A)$
(and $B(A)$ is the maximum of the bounds for $|x|$ and $|u|$ in the displayed formula).
%(and $B(A)= \max\{1+\sqrt {2n}, 1+A\sqrt{2n}\|J\|\}$. 
By left translation 
$(f\bbone_{Q_m}) *\mu_k$ is supported in $m\cdot R_B$
and we have 
\Be\|(f\bbone_{Q_m}) *\mu_k\|_q\le C(q)  2^{-k(2n-1)/q} 
\|f\bbone_{Q_m}\|_q 
\label{restrictedrange}
\Ee 
$\text{ for } \tfrac{4n}{2n-1} <q\le \tfrac{4n+2}{2n-1}$,
uniformly in $m$.
Now for each $m\in \bbZ^{2n+1}$ the cardinality 
of $$\{\widetilde m\in \bbZ^{2n+1}: m\cdot R_B\cap \widetilde m\cdot R_B\neq \emptyset\}$$ is bounded above by 
$C(B)^{2n+1}$. Indeed,
% \footnote{is this paragraph needed?}
 let  $m\cdot R_B\cap \widetilde m\cdot R_B\neq \emptyset$ which is equivalent with
$R_B\cap m^{-1} \widetilde m R_B\neq \emptyset.$ Let $(w,t)= m\cdot \widetilde m^{-1}$ and $(x,u)\in R_B$.
Then $(w,t)\cdot (x,u)=(w+x, t+u+w^\intercal J x)$. if $|w|\ge 2B $ then $(w,t)\cdot (x,u)\notin R_B$. 
If $|w|\le 2B$ and $t>2B +2B^2\|J\|$ then 
$|t+u+w^\intercal J x|>B$ and again $(w,t)\cdot (x,u)\notin R_B$. 
Apply this with $$(w,t)=m^{-1}\cdot \tilde m= ( {\overline {\tilde m}}  -\overline m, \tilde m_{2n+1}- m_{2n+1}-\overline m^\intercal J(\overline {\tilde{m}}-\overline m)),$$ and clearly for fixed $m$ there are only $C(B)^{2n+1}$ integer vectors $\tilde m$ with $|m^{-1} \cdot \tilde m| \le 2B+2B^2\|J\|$.

Hence for general $f\in L^q(\bbH^n)$, $\frac{4n}{2n-1} <q\le \frac{4n+2}{2n-1}$, 
\begin{align*} &\|f*\mu_k\|_q= \Big\|\sum_{m\in \bbZ^{2n+1}} (f\bbone_{Q_m})*\mu_k\Big\|_q
\lc_B \Big(\sum_{m\in \bbZ^{2n+1}} \|(f\bbone_{Q_m})*\mu_k\|_q^q\Big)^{1/q}
\\
&\lc_{B,q} 
  2^{-k(2n-1)/q} 
\Big(\sum_{m\in \bbZ^{2n+1}} \|f\bbone_{Q_m}\|_q^q\Big)^{1/q}
\lc_{B,q} 
  2^{-k(2n-1)/q} \|f\|_q.
  \end{align*}

Now  convolution with $\mu_k$ is uniformly bounded on $L^\infty$ and thus interpolation 
gives
\Be \label{f*muk}
\|f*\mu_k\|_q\le C(q)  2^{-k(2n-1)/q} \|f\|_q \text{ for } \frac{4n}{2n-1} <q\le \infty.
\Ee
Alternatively one can argue with 
as  in \cite{pr-se}  with the Wolff version of decoupling for $q> \frac{4n+2}{2n-1}$.
By duality \eqref{f*muk} also implies  
\Be\label{f*mukdual}\|f*\mu_k\|_p\le C(p)  2^{-k(2n-1)(1-\frac 1p)} \|f\|_p \text{ for } 1\le p< \frac{4n}{2n+1}.
\Ee 

By modifying the definition of $\gamma$ in \eqref{Kdefin} we also get
the same estimate for $\mu_k$ replaced with $2^{-k} \frac{d}{ds} \dil_s\mu_{k}$ when $s\approx 1$.
This proves Proposition \ref{dyadicq-Sob}.
A standard Sobolev imbedding argument yields Corollary \ref{maxdyadic}.

\medskip

 \noi{\it Remark.} Up to this point we worked with a measure $\mu$ in $\bbR^{2n}$ supported on a graph  
 $x_{2n}= g(x_1,\dots, x_{2n-1})$, with $D^2g$ nondegenerate. We must also  also consider  the case
 where  the surface is given by $x_j=g(x_1,\dots, x_{j-1}, x_{j+1},\dots)$. However this situation can be reduced to the former by permuting the variables; one just needs to note that the change of variables argument in \S\ref{changesofvar} applies, and that the skew symmetric matrix $J$ in the former case  is replaced by $P^\intercal JP$ after a change of variables, with   $P$   a suitable permutation matrix.

\subsection*{\it $L^p$ Sobolev result}
%\noi {\it Remark.}
Proposition \ref{dyadicq-Sob} can be reformulated as a regularity result in Besov spaces for functions in $\bbR^{2n+1}$ which are supported on a compact set. However one can  combine  Proposition \ref{dyadicq-Sob}, part (i)  with the result in Theorem 1.1 of \cite{prs} to show a better result using Sobolev spaces $L^q_\alpha(\bbR^{d})$
for $\alpha= (d-2)/q$, with $d=2n+1$.

\begin{corollary} Let $\cU$ be a compact neighborhood of the origin  
%in $\cH^n=\bbR^{2n+1}$ 
and let 
$L^q_\alpha(\bbR^{2n+1})$ be the usual Sobolev space.
Let $\cR f=f*\mu$ with $\mu$ as above, and with the convolution on the Heisenberg group $\bbH^n$.
Then we have for $\frac{4n}{2n-1} <q< \infty$, 
$$\| \cR f\|_{L^q_{(2n-1)/q}} \le C(\cU,q)  \|f\|_q$$
whenever $f$ is supported in $\cU$.
\end{corollary}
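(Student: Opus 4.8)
The plan is to upgrade the frequency-localized bounds of Proposition~\ref{dyadicq-Sob}(i) to the $L^q_\alpha$ estimate by running the decoupling iteration of \S\ref{decouplingstepsect}--\S\ref{iterationsect} in a square-function (equivalently, $\ell^2$-valued) formulation; this is precisely the mechanism of Theorem~1.1 of \cite{prs}, applied here with $d=2n+1$ and $\alpha=(d-2)/q=(2n-1)/q$. Since $f$ is supported in the compact set $\cU$ and $\mu$ is compactly supported, $\cR f=f*\mu$ is supported in a fixed compact set, so it suffices to prove the corresponding local estimate. Writing $\mu=\mu_0+\sum_{k\ge1}\mu_k$ as in \S\ref{outline} and noting that the oscillatory representation \eqref{mukla}, together with the nondegeneracy of $g$ and the boundedness of $\la$, forces $\widehat{\mu_k}$ to be essentially supported in $\{|\xi|\approx 2^k\}$, the Littlewood--Paley characterization of $L^q_\alpha(\bbR^{2n+1})$ ($1<q<\infty$) reduces the corollary to the square-function bound
\Be\label{sqfn}
\Big\|\Big(\sum_{k\ge1}2^{2k\alpha}|f*\mu_k|^2\Big)^{1/2}\Big\|_q\lc\|f\|_q,\qquad \tfrac{4n}{2n-1}<q<\infty,
\Ee
the low-frequency piece $\mu_0$ and the rapidly decaying frequency tails being absorbed into the Hardy--Littlewood maximal operator and routine error terms.

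Estimate \eqref{sqfn} is strictly stronger than the Besov-type bound $\sup_k 2^{k\alpha}\|f*\mu_k\|_q\lc\|f\|_q$ that Proposition~\ref{dyadicq-Sob}(i) yields directly, and it cannot be reached by crude summation in $k$ nor by interpolating against the trivial $L^\infty$ bound (which carries no decay in $k$). Instead one repeats the scheme of \S\ref{decouplingstepsect}--\S\ref{iterationsect} with $\ell^2(\bbN)$-valued data: (a) the Bourgain--Demeter decoupling theorem and its curved-cone version used in \S\ref{model} hold with values in $L^2$, hence in $\ell^2$; (b) the relevant cones are invariant under the automorphic dilations, so the decoupling step Proposition~\ref{lpdecouplingprop}, the quadratic shear obeying \eqref{S-assumpt}, the change of variables of \S\ref{changesofvar}, the plates \eqref{platedefinition}, and the integration-by-parts bounds of Lemma~\ref{derivPhi} depend only on the frozen, scale-invariant cone geometry and hence apply uniformly in the frequency scale $2^k$; and (c) the $L^2$ bound for the underlying folding-canonical-relation Fourier integral operators used in \S\ref{iterationsect} upgrades to its $\ell^2$-valued form by orthogonality. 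Inserting the weight $2^{2k\alpha}$ in the $\ell^2$-sum --- which exactly matches the $2^{-k(2n-1)/q}$ decay produced at each stage --- and carrying \eqref{decappl}--\eqref{interpol} through in vector-valued form yields \eqref{sqfn} first for $2\le q<\frac{4n+2}{2n-1}$, after summing the $2^{l\eps}$ losses in $l$ (possible exactly when $q>\frac{4n}{2n-1}$), and then for $\frac{4n+2}{2n-1}\le q<\infty$ by the same $L^\infty$-interpolation step as in \S\ref{iterationsect}.

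The main obstacle is exactly this passage from the scalar to the square-function formulation. One must verify that none of the Heisenberg-specific maneuvers of \S\ref{decouplingstepsect}--\S\ref{changesofvar} secretly introduces a dependence on the frequency scale that would spoil summability in $k$, and that the base case of the iteration --- the $\ell^2$-valued analogue of \eqref{interpol}, obtained by interpolating the $\ell^2$-valued $L^2$ estimate against an $L^\infty$ estimate --- still has, after the $2^{2k\alpha}$ weighting, a convergent sum over $k$; this holds because at the finest scale $\delta_1\approx 2^{-l}$ the $L^2$ gain $2^{-k(2n-1)/2}$ dominates $2^{k\alpha}$, as $\alpha=(2n-1)/q<(2n-1)/2$ for $q>2$. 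Since every estimate in \S\S\ref{decouplingstepsect}--\ref{iterationsect} is pointwise in $2^k$ and the cone family is dilation-invariant, the required uniformity is automatic, so \eqref{sqfn}, and hence the corollary, follows. Equivalently, once Proposition~\ref{dyadicq-Sob}(i) --- or the decoupling inequality Proposition~\ref{lpdecouplingprop} --- is in hand, the result may simply be quoted from Theorem~1.1 of \cite{prs}.
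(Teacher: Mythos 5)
Your closing sentence---apply Theorem 1.1 of \cite{prs} to the dyadic bounds of Proposition \ref{dyadicq-Sob}(i)---is exactly the paper's proof: that theorem takes the scalar estimates $\|f*\mu_k\|_q\lc 2^{-k(2n-1)/q}\|f\|_q$, together with the frequency localization of $\mu_k$ at scale $2^k$ (up to Schwartz tails) and the fixed compact support of the kernels, as black-box hypotheses, and yields $\cR:(B^0_{q,q})_{\text{comp}}\to F^{(2n-1)/q}_{q,r}$; taking $r=2$ and using $L^q\subset B^0_{q,q}$ for $q\ge 2$ gives the corollary. But this is not ``equivalent'' to the route occupying the body of your proposal, and your description of \cite{prs} as ``precisely the mechanism'' of running the decoupling iteration in square-function form misreads it: the proof there is a Calder\'on--Zygmund-type argument exploiting the spatial localization of the kernels, not a vector-valued rerun of decoupling, and that is exactly why the paper can simply quote it.

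Your main argument has a genuine gap at the step you yourself flag as the obstacle. At the critical order $\alpha=(2n-1)/q$ the weighted pieces $2^{k\alpha}f*\mu_k$ satisfy uniform $L^q$ bounds with zero margin in $k$, so the $\ell^2$-in-$k$ estimate cannot come from Minkowski or crude summation (as you note), and it also does not follow from the ingredients you list: the Bourgain--Demeter inequality is not a linear operator bound, so ``holds with values in $\ell^2$'' is not automatic, and in any case it only decouples in $\nu$ for each fixed $k$---it supplies no orthogonality in $k$ at $L^q$; uniformity in $k$ of the scalar steps (your point (b)) likewise does not produce the diagonal $L^q(\ell^2)\to L^q(\ell^2)$ bound, which is essentially the corollary itself. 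Moreover the weighted version of the interpolation producing \eqref{interpol} fails as you set it up: the weighted $L^\infty$ input grows like $2^{k\alpha}$, so after interpolating with the weighted $L^2$ bound the $k$-decay at $L^q$ is exactly zero. Your justification that ``the $L^2$ gain $2^{-k(2n-1)/2}$ dominates $2^{k\alpha}$'' gives a convergent $k$-sum only at the $L^2$ level; at $L^q$ the gain is exactly cancelled by the weight, which is the original obstruction, not its resolution. What closes this gap is genuinely new input---the Calder\'on--Zygmund mechanism of \cite{prs}---so the correct write-up is the short one: verify that the operators $f\mapsto f*\mu_k$ satisfy the hypotheses of Theorem 1.1 of \cite{prs} and quote that theorem together with Proposition \ref{dyadicq-Sob}(i), discarding the vector-valued redo of Proposition \ref{lpdecouplingprop}.
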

See the discussion in \S2 of \cite{prs} for related  examples.
Theorem 1.1 of \cite{prs}    actually gives a  better statement
using Besov and Triebel-Lizorkin spaces, namely that  $\cR: (B^0_{q,q})_{\text{comp}}\to F^{(2n-1)/q}_{q,r}$  for all $r>0$, in the $q$-range of the corollary.

  \section{Estimates for the global maximal operator} \label{globalsect}
By the Marcinkiewicz interpolation theorem it suffices to prove a weak type $(p,p)$ estimate for $p> \frac{2n}{2n-1}$, i.e.
\Be \label{wt}\meas\big(\big\{(x,u): \sup_t |f*\dil_t \mu (x,u)|>\alpha \big\}\big) \lc \alpha^{-p}\|f\|_{L^p(\bbH^n)}^p 
\Ee
%with $\eps(p)>0$ 
for $p>\frac{2n}{2n-1}$.
%We shall prove 

We use Calder\'on-Zygmund theory on the Heisenberg group, with respect to the nonisotropic balls
\begin{align*}B((a,b),\delta)&= \{(y,v): |(y,v)^{-1}\cdot (a,b) |\le \delta \} \\
&=\{(y,v): |(a-y, b-v-y^\intercal J a)|\le \delta\}
\end{align*}
see for example \cite{CW},  \cite{FS}, or 
%chapter I, \S4 of 
\cite{Steinbook3}.
We apply the Calder\'on-Zygmund decomposition  to the function $|f|^p$.
%Theorem 2 on page 1, and  in  \cite{Steinbook3}, and its proof,  to the $|f(x)|^p \arg (f(x))$.

Let $$\Om_\alpha\equiv \Omega_\alpha(f)=\{ (x,u): M_{HL}( |f|^p) (x,u)>\alpha^p\}\,$$
so that \Be\label{HLbd} \meas (\Om_\alpha)\lc 
\alpha^{-p} \|f\|_p^p.\Ee
Let $g_1 = f\bbone_{\Om_\alpha^\complement}$. Then $|g_1(x)|\le \alpha$ almost everywhere

%One  decomposes $f=g+b$  where \Be |g(x)|^p \le C\alpha^p \text{ a.e. }.\Ee 
We have $\Omega_\alpha = \cup_{Q\in \cQ_\alpha} Q$
where the sets $Q$ in the family $\cQ_\alpha$  are disjoint and measurable and  there are  constants $c_1$, $c_2$ such that 
$2\le c_1< c_2/8$ such that for every $Q$ there is a point $P_Q$ and an $r_Q>0$ with
$$B(P_Q, r_Q)\subset Q\subset B(P_Q, c_1 r_Q)  \subset B(P_Q, 2c_1 r_Q) \subset \Omega_\alpha;$$
moreover $$\sum_Q \bbone_{B(P_Q, 2c_1 r_Q)}(x)
 \lc C \text{ almost everywhere}$$
 and 
$$B(P_Q, c_2 r_Q)\cap \Omega_\alpha^\complement \neq \emptyset\,.$$
We decompose 
%The function $f\bbone_{\Omega_\alpha} $ can be decomposed as 
$$f\bbone_{\Om_\alpha}= \sum_{Q\in \cQ_\alpha}  f\bbone_Q.$$
Let $\phi$ be a $C^\infty$ function supported in the Euclidean ball of radius 1 centered at the origin and such that $\int \phi=1$. We introduce some cancellation using suitable convolutions as in \cite{carbery}. 
 Let $$\phi_m  (x,u)=2^{-m (2n+2) } \phi(2^{-m}x, 2^{-2m} u)$$ let $m_Q$ be such that 
 $c_1r_Q/2\le 2^{m_Q}< c_1r_Q$ and set
$$ \begin{aligned}g_Q &= (f\bbone_Q) * \phi_{m_Q},
\\
b_Q&= f\bbone_Q - (f\bbone_Q) * \phi_{m_Q}
.
\end{aligned}
$$
 Then $g_Q$, $b_Q$ are  supported in $B(P_Q, 2c_1 r_Q)$. We set $g_2= \sum_{Q\in \cQ_\alpha} g_Q$ and we have $|g_2(x)|\le C\alpha$ almost everywhere. Let $g=g_1+g_2$ and $b=f-g$.
 % Thus 
 Now 
 \begin{align*}
&  \meas\big(\big\{(x,u): \sup_t |f*\dil_t \mu (x,u)|>\alpha \big\}\big) 
\\  &\le  \meas(\Om_\alpha)+ \meas\big(\big\{(x,u): \sup_t |g*\dil_t \mu (x,u)|>\alpha/2 \big\}\big) 
\\&\quad +
\meas\big(\big\{(x,u)\in \Omega_\alpha^\complement : \sup_t |b*\dil_t \mu (x,u)|>\alpha/2 \big\}\big).
  %\lc  \|f\|_{L^p(\bbH^n)}^p 
\end{align*}

Since $n\ge 2$ we know the $L^2$ boundedness of our maximal operator and  the standard argument 
using $\|g\|_\infty\lc\alpha$ gives
\Be\label{wtgood}\begin{aligned}
 &\meas\big(\big\{(x,u): \sup_t |g*\dil_t \mu (x,u)|
 >\alpha/2 \big\}\big) \\&\le
 (\alpha/2)^{-2} \|\sup_t |g*\dil_t \mu |\|_2^2 \\&\le C^2 \alpha^{-2} \|g\|_2^2 \le \widetilde C^p \alpha^{-p}\|f\|_p^p
 \end{aligned}
 \Ee
 
 It suffices to estimate
 % the size of the subset of $\Omega_\alpha$ for which
  $$\meas\big(\big\{((x,u): \sup_t |b*\dil_t \mu (x,u)|>\alpha/2\big\}\big).$$
  
  For $m\in \bbZ$ let $$b^{[m]}= \sum_{\substack{Q\in \cQ_\alpha: \\ m_Q=m}} b_Q\,.$$
  Observe that
  $$\supp(b^{[m]}*\mu_t) \subset \Omega_\alpha  \text{ if } t\le 2^{m-C_1} $$ 
  where $C_1\in \bbN$ and $C_1$ depends only on the support of $\mu$.
  
  If $(x,u)\notin \Omega_\alpha$ we have
  \begin{align}
  &\sup_{t>0} |b*\dil_t\mu(x,u) |
  \le \Big(\sum_{j\in \bbZ} \sup_{1/2<s<1} 
  |b* \dil_{2^js} \mu(x,u)|^p\Big)^{1/p}
  \notag
  \\
  &=
   \Big(\sum_{j\in \bbZ} \sup_{1/2<s<1} 
  \big| \sum_{m\le j+C_1} b^{[m]} * \dil_{2^js} \mu(x,u)\big|^p\Big)^{1/p}
  \notag
  \\
  &\le \sum_{k\ge 0} 
  \Big(\sum_{j\in \bbZ} \sup_{1/2<s<1} 
  \big| \sum_{m\le j+C_1} b^{[m]} * \dil_{2^js} \mu_k(x,u)\big|^p\Big)^{1/p}.
  \label{fixedxstr}
  \end{align}
  
 We have straightforward estimates   
\begin{align*} &\|\mu_k\|_{L^1} + 2^{-k} \|\nabla\mu_k\|_{L^1} \le C,
\\
&2^{-k} 
\Big\|\frac{d}{ds} \dil_s \mu_k\Big\|_{L^1} + 2^{-2k} \Big\|\nabla \frac{d}{ds} \dil_s \mu_k\Big\|_{L^1} \le C, \quad \frac {1}{2}\le s\le 1.
\end{align*} 
which we use for $m\le j- C_2 k$ with sufficiently large  $C_2$, say $C_2=10$.
For this range we estimate the corresponding part in \eqref{fixedxstr}
\begin{align*}
&\Big\|  \sum_{k\ge 0} 
  \Big(\sum_{j\in \bbZ} \sup_{1/2<s<1} 
  \big| \sum_{m\le j-C_2k } b^{[m]} * \dil_{2^js} \mu_k\big|^p\Big)^{1/p}\Big\|_p
  \\
  &\le \sum_{k\ge 0} \sum_{l\ge 0} 
  \Big\|
  \Big(\sum_{j\in \bbZ} \sup_{1/2<s<1} 
  \big|  b^{[j-C_2k -l]} * \dil_{2^js} \mu_k\big|^p\Big)^{1/p}\Big\|_p
  \\
  &= \sum_{k\ge 0} \sum_{l\ge 0} 
  \Big(\sum_{j\in \bbZ}   \big\|  
  \sup_{1/2<s<1} 
  \big|  b^{[j-C_2k -l]} * \dil_{2^js} \mu_k\big| \big\|_p^p\Big)^{1/p}
  \end{align*}
  Now  let $f^{[m]}= \sum_{Q: m_Q=m} f\bbone_Q$ so that $b^{[m]}=f^{[m]}*(\delta-\phi_m)$
  (with $\delta$ the Dirac measure at the origin). Then
    \begin{align*}
 &  \big\|  
  \sup_{1/2<s<1} 
  \big|  b^{[j-C_2k -l]} * \dil_{2^js} \mu_k| \big\|_p
  \\
  &\le \big\|  
      b^{[j-C_2k -l]} * \dil_{2^{j-1}} \mu_k\big\|_p+ \int_{1/2}^1 
   \big\|
  b^{[j-C_2k -l]} * \frac d{ds} \dil_{2^{j}s} \mu_k\big\|_pds
  \\
  &\le \|f^{[j-C_2 k-l]}\|_p  
  \Big[ \|(\delta-\phi_{j-C_2k-l})* \dil_{2^{j-1}}\mu_k\|_{L^1} 
  \\&\qquad\qquad\qquad\qquad\qquad +
   \int_{1/2}^1 \|(\delta-\phi_{j-C_2k-l})* \frac{d}{ds} \dil_{2^{j}s}\mu_k\|_{L^1}  ds \Big]
  \\
  &\lc 2^{2k}  2^{-C_2 k-l}  \|f^{[j-C_2 k-l]}\|_p  
  \end{align*}
  and thus
  \begin{align*}
  &\Big\|  \sum_{k\ge 0} 
  \Big(\sum_{j\in \bbZ} \sup_{1/2<s<1} 
  \big| \sum_{m\le j-C_2k } b^{[m]} * \dil_{2^js} \mu_k\big|^p\Big)^{1/p}\Big\|_p
    \\
  &\lc    \sum_{k\ge 0} \sum_{l\ge 0}
  2^{2k}  2^{-C_2 k-l}  
  \Big(\sum_{j\in \bbZ}  \|f^{[j-C_2 k-l]}\|_p ^p \Big)^{1/p} \\&
  \lc \|f\|_p  \sum_{k\ge 0} 
  2^{(2-C_2)k}  \sum_{l\ge 0} 2^{-l}  \lc\|f\|_p
    \end{align*}
   Note that this estimate holds for all $p\ge 1$.
   
   The main contributions come  from the range $j-C_2 k\le m\le j+C_1$. Here we use Corollary \ref{maxdyadic}  and bound, for fixed $k$,
   \begin{align*}
 &  \Big\|  
  \Big(\sum_{j\in \bbZ} \sup_{1/2<s<1} 
  \big| \sum_{j-C_2 k <m\le j+C_1 } b^{[m]} * \dil_{2^js} \mu_k\big|^p\Big)^{1/p}\Big\|_p
 \\ &\le 
\Big(  \sum_{j\in \bbZ}\sum _{j-C_2 k <m\le j+C_1}  (C_1+C_2 k)^{p/p'}
\| b^{[m]} * \dil_{2^js} \mu_k\|_p^p\Big)^{1/p}
   \\
   &\lc 2^{k(\frac{2n} p-2n+1)}
   \Big(  \sum_{j\in \bbZ}\sum _{j-C_2 k <m\le j+C_1}  (C_1+C_2 k)^{p/p'}
\| b^{[m]} \|_p^p\Big)^{1/p}\\
 &\lc_{C_1, C_2}  (1+k) 2^{k(\frac{2n} p-2n+1)}
   \Big(  \sum_{m\in \bbZ}
\| b^{[m]} \|_p^p\Big)^{1/p} \\&\lc
 (1+k) 2^{k(\frac{2n} p-2n+1)}
   \|f\|_p.
   \end{align*}
   Thus for
    $p>\frac{2n}{2n-1}$,
   $$
   \Big\|  \sum_{k\ge 0} 
  \Big(\sum_{j\in \bbZ} \sup_{1/2<s<1} 
  \big| \sum_{\substack{ j-C_2 k <\\m\le j+C_1} } b^{[m]} * \dil_{2^js} \mu_k\big|^p\Big)^{1/p}\Big\|_p
  \le C_p \|f\|_p \,.
  $$
  We combine the $L^p$ estimates and get 
  $p>\frac{2n}{2n-1}$
  $$
  \big\|\sup_{t>0} |b*\dil_t\mu | \big\|_{L^p(\Omega_\alpha^\complement)}  \lc_p \|f\|_p.$$
  
Hence by 
 Tshebyshev's inequality,
  \Be\label{wtoff}
  \meas\big(\big\{(x,u)\in \Omega_\alpha^\complement : \sup_t |b*\dil_t \mu (x,u)|>\alpha/2 \big\}\big) \lc \alpha^{-p}\|f\|_p^p.
  \Ee
  The desired weak type bound \eqref{wt} follows from
  \eqref{HLbd}, \eqref{wtgood} and  \eqref{wtoff}. \qed

\appendix\section{\it   An  integration by parts lemma}\label{interlude}
To perform the decoupling step we used  a familar integration by parts  lemma (which has been used many times but  is often not  found  in the precise 
form needed in an application). 
We formulate what we need here  and include some  details of the proof, for convenience.
%We include the details here.

Let $h\in C^\infty_c $ function on $\bbR^n$ and  let $w\mapsto \psi(w)$ be a real valued $C^\infty$ function
such that $\nabla\psi\neq 0$ on the support of $h$.

We define $$L h= \text{div} \big( \frac{h \nabla \psi }{|\nabla\psi|^2}\big).$$
Then the formal adjoint 
%$L$ is the formal adjoint of 
$ L^*=- \inn{\frac {\nabla\psi}{|\nabla\psi|^2}}{\nabla} $  satisfies
$i\lambda ^{-1}L^* e^{i\lambda \psi}=e^{i\lambda \psi}$.

We let $L^0 h=h$ and define inductively $L^N h=L L^{N-1} h$.
We then have by integration by parts 
$$\int e^{i\la\psi(w)} h(w) dw = 
\Big(\frac{i}{\la}\Big)^N \int e^{i\la\psi(w)} L^Nh(w) dw 
$$ for $N=1,2,\dots$.

We need to analyze the behavior of $L^Nh$ and the following terminology will be helpful.

\begin{definition} \label{typeABterms}
{\rm 
(i) The term $h$ is of type $(A,0)$. A term is of type $(A,j)$ if it is $h_j/|\nabla\psi|^j$ where $h_j$ is a derivative of order $j$ of $h$. 
%A term is of type $(A)$ if it is of type $(A,j)$ for some $j\ge 0$.

(ii) A term is of type $(B,0)$ if it is equal to $1$. A term is of type $(B,j)$ for some $j\ge 1$  if it  is of the form $\psi_{j+1}/|\nabla\psi|^{j+1}$ where $\psi_{j+1}$ is a derivative of order $j+1$ of $\psi$. }
\end{definition}

\begin{lemma}\label{intbyparts}
Let $N=0,1,2,\dots$. Then $$L_N h = \sum_{\nu=1}^{K(N,n)} c_{N,\nu} h_{N,\nu}$$ where each $h_{N,\nu}$ is of the form
%The term $L^N h$ is a linear combination  of terms of the form
$$P(\tfrac{\nabla \psi}{|\nabla\psi|}) \beta_A \prod_{l=1}^M \gamma_l$$
where $P$ is a polynomial of $n$ variables (independent of $h$ and $\psi$), $\beta_A$ is of type $(A,j_A) $ for some $j_A\in \{0,\dots, N\}$ and the terms $\gamma_l$ are of type $(B,\ka_l)$, so that $j_A+\sum_{l=1}^M \ka_l=N$. The terms $P, \beta_A, \gamma_l$ depend on $\nu$.
\end{lemma}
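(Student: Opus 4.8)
The plan is to prove Lemma \ref{intbyparts} by induction on $N$, tracking the structure of each summand through a single application of the operator $L$. The base case $N=0$ is immediate: $L^0h=h$ is of type $(A,0)$, which fits the asserted form with $P\equiv 1$, $\beta_A=h$, and the empty product of $(B,\cdot)$ terms. For the inductive step, I would assume $L^{N-1}h$ is a finite linear combination of terms of the form $P\big(\tfrac{\nabla\psi}{|\nabla\psi|}\big)\,\beta_A\prod_{l=1}^M\gamma_l$ with $j_A+\sum_l\kappa_l=N-1$, and then apply $Lh=\mathrm{div}\big(h\,\nabla\psi/|\nabla\psi|^2\big)$ to a single such term. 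Writing $L h=\sum_i\partial_i\big(h\,\psi_i/|\nabla\psi|^2\big)$ where $\psi_i=\partial_i\psi$, the Leibniz rule distributes the derivative onto each factor $P$, $\beta_A$, each $\gamma_l$, and onto the extra factor $\psi_i/|\nabla\psi|^2$, and finally summation over $i$ is applied.

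The key bookkeeping is that each of the four types of differentiation raises the total weight by exactly one. Differentiating $\psi_i/|\nabla\psi|^2$ in $\partial_i$ and summing over $i$ produces $\sum_i\partial_i\psi_i/|\nabla\psi|^2$ (a $(B,1)$ term, since $\partial_i\psi_i$ is a second derivative of $\psi$) plus $\sum_i\psi_i\,\partial_i(|\nabla\psi|^{-2})$; the latter equals $-2\sum_i\psi_i(\sum_j\psi_j\psi_{ij})/|\nabla\psi|^4$, which after collecting a polynomial in $\nabla\psi/|\nabla\psi|$ is again a polynomial in the normalized gradient times a single $(B,1)$ factor $\psi_{ij}/|\nabla\psi|^2$. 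Differentiating $\beta_A=h_{j_A}/|\nabla\psi|^{j_A}$ either hits $h_{j_A}$, raising it to a derivative of order $j_A+1$ (still over $|\nabla\psi|^{j_A}$, and absorbing one more $\psi_i/|\nabla\psi|$ factor into $P$ to reach $(A,j_A+1)$), or hits $|\nabla\psi|^{-j_A}$, producing $\psi_{i\ell}/|\nabla\psi|^2$ i.e.\ a new $(B,1)$ term plus normalized-gradient factors. Differentiating a $(B,\kappa_l)$ term $\psi_{\kappa_l+1}/|\nabla\psi|^{\kappa_l+1}$ similarly yields either a $(B,\kappa_l+1)$ term or a $(B,\kappa_l)$ term together with an extra $(B,1)$ factor and more normalized-gradient factors; differentiating $P(\nabla\psi/|\nabla\psi|)$ uses $\partial_i(\psi_j/|\nabla\psi|)=\psi_{ij}/|\nabla\psi|-\psi_j(\sum_m\psi_m\psi_{im})/|\nabla\psi|^3$, again a polynomial in the normalized gradient times a single $(B,1)$ factor. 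In every case the sum of the $(A)$-index and the $(B)$-indices increases by exactly $1$, so the new total is $N$, and the finiteness of the number of summands $K(N,n)$ follows because each step multiplies the term count by a bounded factor depending only on $n$ and on the degrees of the polynomials $P$ involved.

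I would present this as: fix one summand of $L^{N-1}h$, expand $L$ of it via Leibniz, and verify in each of the (finitely many) cases that the result is again a linear combination of terms of the stated shape with total weight $N$. The only mild subtlety worth spelling out is the algebraic identity $\partial_i\big(|\nabla\psi|^{-r}\big)=-r\,|\nabla\psi|^{-r-2}\sum_j\psi_j\psi_{ij}$, which one rewrites as $-r\,|\nabla\psi|^{-r}\sum_j\tfrac{\psi_j}{|\nabla\psi|}\cdot\tfrac{\psi_{ij}}{|\nabla\psi|}$ to exhibit the normalized-gradient polynomial factor explicitly and one genuine $(B,1)$ factor; the powers of $|\nabla\psi|$ bookkeep correctly precisely because each normalization pairs one $\psi_j$ with one $|\nabla\psi|$. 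I do not expect a genuine obstacle here — the content of the lemma \emph{is} this induction — but the place that requires care is making the definition of type $(A,j)$ and $(B,j)$ robust enough that "absorbing surplus $\psi_i/|\nabla\psi|$ factors into the polynomial $P$" is legitimate; once one agrees that $P$ may be an arbitrary polynomial in the $n$ components of the unit vector $\nabla\psi/|\nabla\psi|$ (with no constraint tying its degree to $N$), all cases close uniformly.
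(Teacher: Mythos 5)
Your proposal is correct and follows essentially the same route as the paper: induction on $N$, applying $L=\sum_i\partial_i(\,\cdot\,\psi_i/|\nabla\psi|^2)$ to a generic summand, distributing via the Leibniz rule, and using the identity $\partial_i(|\nabla\psi|^{-r})=-r|\nabla\psi|^{-r}\sum_j\tfrac{\psi_j}{|\nabla\psi|}\tfrac{\psi_{ij}}{|\nabla\psi|}$ to show each differentiation raises the combined $(A)$/$(B)$ weight by exactly one, with surplus normalized-gradient factors absorbed into the unconstrained polynomial $P$. The paper organizes the same bookkeeping slightly differently (recording how the first-order part of $L$ acts on each building block and then invoking Leibniz), but the content is identical.
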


\begin{proof}[Sketch of proof]The statement holds when $N=0$.
We compute
\begin{align*}La &=  \frac{1}{|\nabla \psi|^2} \sum_{j=1}^n  \frac {\partial \psi}{\partial w_j} \frac{\partial a}{\partial w_j}
+ \frac{a}{|\nabla\psi|^2} \sum_{j=1}^n  \frac{\partial^2 \psi}{(\partial w_j)^2}
\\&- \frac{2a}{|\nabla\psi|^4} \sum_{j=1}^n \frac{\partial \psi}{\partial w_j} \sum_{k=1}^n 
\frac{\partial \psi}{\partial w_k}\frac{\partial^2\psi }{\partial w_j\partial w_k}.
\end{align*}
In particular one can check immediately that the assertion holds for $N=1$.

Now let $a$ be of type $(A,m)$, %i.e. $a=h_m/|\nabla\psi|^m$ 
and $\ga$ be of type $(B,m)$.
%i.e.  $\ga=\psi_{m+1}/|\nabla\psi|^{m+1}$ where $\psi_{m+1}$ is a derivative  of $\psi$, of order $m+1$.
One observes 
that
\begin{align*}
\frac{1}{|\nabla \psi|^2} \frac{\partial\psi }{\partial w_j}\frac{\partial a} {\partial w_j} 
&= P(\tfrac{\nabla\psi}{|\nabla\psi |}) a_{m+1} 
+a\sum_{i=1}^n
P_{A,i}(\tfrac{\nabla\psi}{|\nabla\psi |}) b_{1,i}
\\
\frac{1}{|\nabla \psi|^2} \frac{\partial\psi }{\partial w_j}\frac{\partial \gamma} {\partial w_j} 
&= P(\tfrac{\nabla\psi}{|\nabla\psi |}) b_{m+1} 
+\gamma\sum_{i=1}^n
P_{B,i}(\tfrac{\nabla\psi}{|\nabla\psi |})
 b_{1,i}
\end{align*}
where 
%$P_1$, $P_2$ are polynomials of degree $1$, $2$ respectively, 
$a_{m+1}$ is of type $(A, m+1)$,
$b_{m+1}$ is of type $(B, m+1)$, and the 
$b_{1,i}$ are of type $(B,1)$.
% and  $b_m$ is of type $(B,m)$.
The polynomials are given by $P(x)=x_j$ and $P_{A,i}(x)=-mx_ix_j$,
 $P_{B,i}(x)=-(m+1)x_ix_j$.

Next, if $P$ is a polynomial of $n$ variables then
$$\frac{1}{|\nabla \psi|^2} \frac{\partial\psi }{\partial w_j}\frac{\partial } {\partial w_j} 
(P(\tfrac{\nabla\psi}{|\nabla\psi|})) =\sum_{i=1}^n\tilde  P_i (\tfrac{\nabla\psi}{|\nabla\psi|}) b_i
$$ where the $b_i$ are  of type $(B,1)$ and $\tilde P_i(x)=x_j(\frac{\partial P}{\partial x_i} -x_i \sum_{k=1}^n x_k \frac{\partial P}{\partial x_k})$.

Concerning the other terms in the definition of $Lb$ (with $b$ equal to $a$ or $\gamma$) we note that  $b \frac{\partial^2 \psi}{(\partial w_j)^2}|\nabla\psi|^{-2}$ is a product of $b$ and  a type $(B,1)$ term and
\[\frac{b}{|\nabla\psi|^4} \frac{\partial \psi}{\partial w_j} 
\frac{\partial \psi}{\partial w_k}\frac{\partial^2\psi }{\partial w_j\partial w_k}\] is a product of 
$P_2(\frac{\nabla\psi}{|\nabla\psi |}) b$ and  a type $(B,1)$ term, with $P_2(x)=x_jx_k$.

One combines these observations  and uses them together with the Leibniz rule to carry out the induction step.
\end{proof}

% \section{Bounds for the restricted maximal function}

\end{document}